\numberwithin{equation}{section}
\newtheorem{theorem}[equation]{Theorem}
\newtheorem{proposition}[equation]{Proposition}
\theoremstyle{definition}
\newtheorem*{remark}{Remark}
\def\A{\mathscr{A}}
\def\B{\mathscr{B}}
\def\L{\mathscr{L}}
\def\M{\mathscr{M}}
\def\P{\mathscr{P}}
\def\T{\mathscr{T}}
\def\U{\mathsf{U}}
\def\D{\mathsf{D}}
\def\H{\mathsf{H}}
\def\PM{\mathsf{M}}
\tikzstyle{arc1}=[draw, line width=1.2]
\tikzstyle{patt1}=[pattern=north west lines, pattern color=gray!60, thick]
\newcommand{\drawmatching}[3]{%
\def\sf{#1}
\def\n{#2}
\tikz[scale=\sf]{%
\draw[gray!50] (1,0)--(\n,0);
\foreach \i in {1,...,\n} {%
	\draw[arc1,fill] (\i,0) circle (0.06);
}
\foreach \x/\y in {#3} {
	\pgfmathsetmacro\s{\x+\y}
	\pgfmathsetmacro\d{\y-\x}
	\draw[arc1] (\x,0) parabola bend (0.5*\s,0.2*\d) (\y,0);
}
}}
\newcommand{\drawpath}[4]{%
\def\sf{#1}
\def\n{#2}
\def\max{#3}
\def\y{{#4}}
\tikz[scale=\sf, baseline=\max]{%
\draw[thin,gray!50] (1,0) grid (\n+1,\max);
\draw[fill] (\n+1,\y[\n]) circle (0.06);
\foreach \i in {1,...,\n} {%
	\draw[thick] (\i,\y[\i-1])--(\i+1,\y[\i]);
	\draw[fill] (\i,\y[\i-1]) circle (0.06);
}
}}
\tikzstyle{path}=[draw, line width=1.2, color=black]
\title{Enumeration of symmetric arc diagrams}
\author{Juan B. Gil}
\address{Penn State Altoona \\ Altoona, PA 16601}
\email{jgil@psu.edu}
\author{Luis E. Lopez}
\email{lql9872@protonmail.com}
\begin{document}
\begin{abstract}
We give recurrence relations for the enumeration of symmetric elements within four classes of arc diagrams corresponding to certain involutions and set partitions whose blocks contain no consecutive integers. These arc diagrams are motivated by the study of RNA secondary structures. For example, classic RNA secondary structures correspond to 3412-avoiding involutions with no adjacent transpositions, and structures with base triples may be represented as partitions with crossings. Our results rely on combinatorial arguments. In particular, we use Motzkin paths to describe noncrossing arc diagrams that have no arc connecting two adjacent nodes, and we give an explicit bijection to ternary words whose length coincides with the sum of their digits. We also discuss the asymptotic behavior of some of the sequences considered here in order to quantify the extremely low probability of finding symmetric structures with a large number of nodes.
\end{abstract}

\maketitle

\section{Introduction}

In its primary form, RNA is a single-stranded molecule composed of nucleotides with a nitrogenous base. Through hydrogen bonds between their bases, nonadjacent nucleotides interact to form base pairs/triples that are responsible for the folding of the RNA sequence into a helical configuration. The planar representation of such a configuration is known as secondary structure of the RNA molecule.

The first graph theoretical definition of a secondary structure was given in the late 70's by Michael Waterman \cite{Waterman}. Ever since, secondary structures have become an important subject of mathematical research, including their enumeration, asymptotics, pattern avoidance, and their connection to other combinatorial objects, see e.g.\ \cite{DSV04,HaSt99,HSS98,MuNe15,JQR08,SchWat94,Willenbring}. 

The simplest RNA secondary structures can be represented by noncrossing arc diagrams, where the nucleotides of the single-stranded primary structure are described by nodes on a line segment, and base pairs are represented by noncrossing arcs between nodes that form a bond (see Figure~\ref{fig:simpleEx}). Such structures were studied in \cite{Waterman} under the aspects of enumeration, classification by complexity, and algorithms aimed at finding the most stable\footnote{Having minimum free energy.} secondary structures.

\begin{figure}[ht]
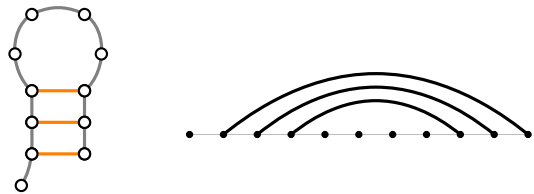

\tikzstyle{n}=[circle, inner sep=1.5, fill=white, draw=black, thick]
\tikzstyle{curve}=[arc1,gray]
\tikz[scale=0.7,baseline=6pt]{
 \draw[curve] (-0.2,-0.6) to[out=60,in=270] (0,0.6) to[out=90,in=270] (0,1.2);
 \draw[curve] (0,1.2) to[out=140,in=200] (0.1,2.7) to[out=20,in=160](0.9,2.7) to[out=-20,in=40](1,1.2);
 \draw[curve] (1,1.2) to[out=270,in=90] (1,0);
 \foreach \y in {0,1,2} {\node[n] at (1,0.6*\y) {};}
 \foreach \y in {0,1,2} {\node[n] at (0,0.6*\y) {};}
 \foreach \y in {0,1,2} {\draw[arc1,orange] (0,0.6*\y) -- (1,0.6*\y);} 
 \foreach \y in {0,1,2} {\node[n] at (1,0.6*\y) {};}
 \foreach \y in {0,1,2} {\node[n] at (0,0.6*\y) {};}
 \foreach \x/\y in {-0.32/1.9,0/2.65,1/2.65,1.32/1.9} {\node[n] at (\x,\y) {};}
 \foreach \x/\y in {-0.2/-0.6} {\node[n] at (\x,\y) {};}
}
\hspace{3ex}
\drawmatching{0.45}{11}{2/11,3/10,4/9}
\caption{RNA secondary structure and its arc diagram representation.}
\label{fig:simpleEx}
\end{figure}

Another important class of RNA structures are the so-called pseudoknots. They form when nucleotides from a single-stranded region inside a hairpin loop bond with nucleotides from another region of the molecule (see Figure~\ref{fig:pseudoknotEx}). Pseudoknots give rise to arc diagrams with crossings. They correspond to self-inverse permutations, also known as {\em involutions}, with no transpositions of the form $(i,i+1)$. 

\begin{figure}[ht]
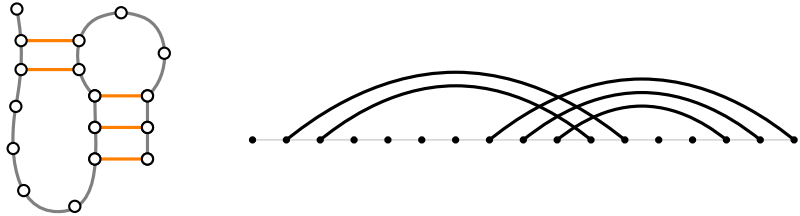

\tikzstyle{n}=[circle, inner sep=1.5, fill=white, draw=black, thick]
\tikzstyle{curve}=[gray, arc1]
\tikz[scale=0.7,baseline=6pt]{
 \draw[curve] (-1.48,2.85) to[out=-80,in=80] (-1.5,1) to[out=260,in=180] (-0.7,-1);
 \draw[curve] (-0.7,-1) to[out=0,in=270] (0,1.2);
 \draw[curve] (0,1.2) to[out=140,in=200] (0.1,2.7) to[out=20,in=160](0.9,2.7) to[out=-20,in=40](1,1.2);
 \draw[curve] (1,1.2) to[out=270,in=90] (1,0);
 \foreach \y in {3.4,4.5} {
 \draw[arc1,orange] (-1.4,0.5*\y) -- (-0.3,0.5*\y);
 	\node[n] at (-1.4,0.5*\y) {};
	\node[n] at (-0.3,0.5*\y) {};} 
 \foreach \y in {0,1,2} {\node[n] at (1,0.6*\y) {};}
 \foreach \y in {0,1,2} {\node[n] at (0,0.6*\y) {};}
 \foreach \y in {0,1,2} {\draw[very thick,orange] (0,0.6*\y) -- (1,0.6*\y);} 
 \foreach \y in {0,1,2} {\node[n] at (1,0.6*\y) {};}
 \foreach \y in {0,1,2} {\node[n] at (0,0.6*\y) {};}
 \node[n] at (-1.48,2.85) {};
 \foreach \x/\y in {-1.5/1,-1.55/0.2,-1.35/-0.6,-0.38/-0.9} {\node[n] at (\x,\y) {};}
 \foreach \x/\y in {0.5/2.78,1.32/2.01} {\node[n] at (\x,\y) {};}
}
\hspace{3ex}
\drawmatching{0.45}{17}{2/12,3/11,8/17,9/16,10/15}
\caption{RNA structure with pseudoknots.}
\label{fig:pseudoknotEx}
\end{figure}

Much is known about the enumeration of structures with pseudoknots (see e.g.\ \cite{HaSt99,JQR08}), but we did not find formulas for the enumeration of symmetric elements.

More recently, there has been increasing interest in developing computational models that include base triples. A base triple forms when a nucleobase interacts with two other bases by edge-to-edge hydrogen bonding, and as described in Abu Almakarem et al.~\cite{APSZL12}, these interactions are actually common in structured RNA molecules. In 2012, H\"oner zu Siederdissen et al.~\cite{HBSH11} proposed an extended model together with a folding algorithm that allows the prediction of RNA secondary structures with base triples. Based on that model, M\"uller and Nebel \cite{MuNe15} developed a combinatorial framework for the study of such extended structures (excluding pseudoknots), and provided enumerative results for several statistics similar to the ones usually considered for classic secondary structures.

In this paper, we address the enumeration of symmetric elements within the aforementioned families of secondary structures, including structures with pseudoknots and certain types of base triples. While global symmetry is rather unlikely in actual RNA secondary structures, most RNA molecules exhibit local symmetry or are predominantly symmetric (notably riboswitches). As discussed by Waterman \cite{Waterman}, symmetric structures are likely to arise when solving for the most stable secondary structures of a given RNA sequence. For example, for the sequence
\begin{center}
 {\bf s} $=$ {\sc ggguaunnnauagggnnncccauannnuauccc}, 
\end{center}
the best first and second order secondary structures are symmetric. For the definition of {\em $k$th order} secondary structures, we refer the reader to \cite[Definition~2.3]{Waterman}.

\medskip
\begin{figure}[ht!]
\def\hsp{\hspace{3.25pt}}
\drawmatching{0.33}{33}{4/12,5/11,6/10,13/21,14/20,15/19,22/30,23/29,24/28}
\tikz{\node at (0,0) {\small \sc g\hsp g\hsp g\hsp u\hsp a\hsp u\hsp n\hsp n\hsp n\hsp a\hsp u\hsp a\hsp g\hsp g\hsp g\hsp n\hsp n\hsp n\hsp c\hsp c\hsp c\hsp a\hsp u\hsp a\hsp n\hsp n\hsp n\hsp u\hsp a\hsp u\hsp c\hsp c\hsp c};
}

\vspace{-1ex}
\caption{Best first order secondary structure for {\bf s}, cf.\ \cite[Fig.~5.1]{Waterman}.}
\label{fig:best1stOrder}
\end{figure}

\begin{figure}[ht!]
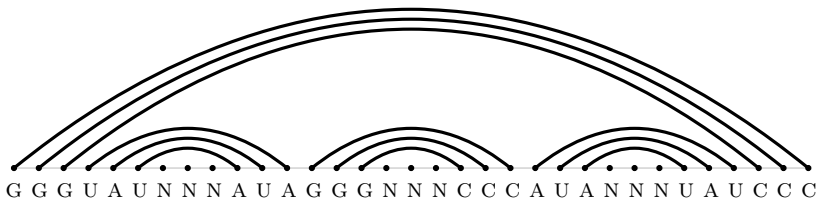

\def\hsp{\hspace{3.25pt}}
\drawmatching{0.33}{33}{1/33,2/32,3/31,4/12,5/11,6/10,13/21,14/20,15/19,22/30,23/29,24/28}
\tikz{\node at (0,0) {\small \sc g\hsp g\hsp g\hsp u\hsp a\hsp u\hsp n\hsp n\hsp n\hsp a\hsp u\hsp a\hsp g\hsp g\hsp g\hsp n\hsp n\hsp n\hsp c\hsp c\hsp c\hsp a\hsp u\hsp a\hsp n\hsp n\hsp n\hsp u\hsp a\hsp u\hsp c\hsp c\hsp c};
}

\vspace{-1ex}
\caption{Best second order secondary structure for {\bf s}, cf.\ \cite[Fig.~6.1]{Waterman}.}
\label{fig:best2ndOrder}
\end{figure}

In order to describe the arc diagrams we are interested in, we first recall some definitions. Let $\sigma$ be a permutation on $[n]=\{1,\dots,n\}$. The {\em reverse complement} of $\sigma$ is the permutation $\sigma^{rc}$ such that $\sigma^{rc}(i)=n+1-\sigma(n+1-i)$ for $i\in[n]$. Moreover, $\sigma$ is said to contain the pattern $3412$ if there are positions $i_1<i_2<i_3<i_4$ such that $\sigma(i_3)<\sigma(i_4)<\sigma(i_1)<\sigma(i_2)$. $\sigma$ is a $3412$-avoiding permutation if it does not contain a $3412$ pattern.

\medskip
We will consider four families of arc diagrams corresponding to:
\begin{enumerate}[(a)]
\itemsep3pt
\item 3412-avoiding involutions on $[n]$ with no transpositions of the form $(i, i+1)$. They correspond to arc diagrams with $n$ nodes such that there is no arc between adjacent nodes, every node belongs to at most one arc, and arcs do not cross. (Section~\ref{sec:noncrossing})
\item Involutions on $[n]$ with no transpositions of the form $(i, i+1)$. That is, arc diagrams like those of type (a), but allowing crossings. 
(Section~\ref{sec:crossings})
\item Noncrossing partitions of $[n]$ into blocks with no consecutive integers. (Section~\ref{sec:Motzkin})
\item Partitions of $[n]$ into blocks with no consecutive integers. (Section~\ref{sec:Bell})
\end{enumerate}
The arc diagrams of type (c) correspond to a smaller class of secondary structures with base triples than the ones considered in \cite{MuNe15}. For example, diagrams of the form\drawmatching{0.45}{4}{1/3,1/4}\, or\drawmatching{0.45}{4}{1/4,2/4}\, are not included in (c). However, (d) provides an extended model that includes pseudoknots. An example is given in Figure~\ref{fig:baseTripleEx}.

\begin{figure}[ht]
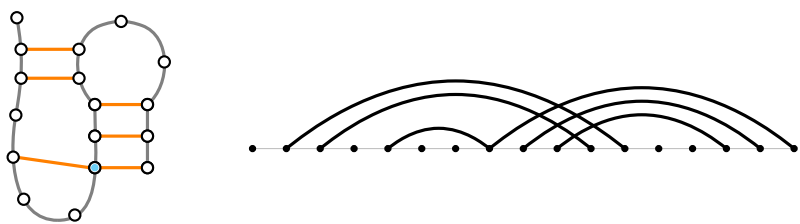

\tikzstyle{n}=[circle, inner sep=1.5, fill=white, draw=black, thick]
\tikzstyle{curve}=[arc1,gray]
\tikz[scale=0.7,baseline=6pt]{
 \draw[curve] (-1.48,2.85) to[out=-80,in=80] (-1.5,1) to[out=260,in=180] (-0.7,-1);
 \draw[curve] (-0.7,-1) to[out=0,in=270] (0,1.2);
 \draw[curve] (0,1.2) to[out=140,in=200] (0.1,2.7) to[out=20,in=160](0.9,2.7) to[out=-20,in=40](1,1.2);
 \draw[curve] (1,1.2) to[out=270,in=90] (1,0);
 \foreach \y in {3.4,4.5} {
 \draw[arc1,orange] (-1.4,0.5*\y) -- (-0.3,0.5*\y);
 	\node[n] at (-1.4,0.5*\y) {};
	\node[n] at (-0.3,0.5*\y) {};} 
 \draw[arc1,orange] (-1.5,0.2) -- (-0.1,0); 
 \foreach \y in {0,1,2} {\node[n] at (1,0.6*\y) {};}
 \foreach \y in {0,1,2} {\node[n] at (0,0.6*\y) {};}
 \foreach \y in {0,1,2} {\draw[arc1,orange] (0,0.6*\y) -- (1,0.6*\y);} 
 \foreach \y in {0,1,2} {\node[n] at (1,0.6*\y) {};}
 \foreach \y in {0,1,2} {\node[n] at (0,0.6*\y) {};}
 \draw[fill,cyan!50] (0,0) circle(0.06);
 \node[n] at (-1.48,2.85) {};
 \foreach \x/\y in {-1.5/1,-1.55/0.2,-1.35/-0.6,-0.38/-0.9} {\node[n] at (\x,\y) {};}
 \foreach \x/\y in {0.5/2.78,1.32/2.01} {\node[n] at (\x,\y) {};}
}
\hspace{3ex}
\drawmatching{0.45}{17}{2/12,3/11,5/8,8/17,9/16,10/15}
\caption{RNA structure with pseudoknots and a base triple.}
\label{fig:baseTripleEx}
\end{figure}

An arc diagram of type (a) or (b) is said to be {\em symmetric} if the corresponding involution is invariant under the reverse complement map. On the other hand, an arc diagram of type (c) or (d) is called {\em symmetric} if the corresponding partition is self-complementary. The {\em complement} of a partition $\pi$ of $[n]$ is the partition $\pi^c$ obtained by replacing $j$ by $n+1-j$ within each block of $\pi$. For example, if $\pi = 126\,|\,3\,|\,4\,|\,57$, then $\pi^c = 13\,|\,267\,|\,4\,|\,5$.

For each one of the above sets (a)--(d), we enumerate the subsets of symmetric elements by giving recurrence relations similar to the ones known for the entire class. In Section~\ref{sec:Motzkin}, we also consider the set $\L_{2n-1}$ of symmetric arc diagrams of type (c) with $2n-1$ nodes and give a bijection between $\L_{2n-1}$ and the set of base 3 (ternary) $n$-digit numbers with digit sum equal to $n$.

Finally, in Section~\ref{sec:summary}, we summarize our results and briefly discuss the asymptotic behavior of the sequences enumerating the various symmetric elements studied here.

\section{Symmetric noncrossing arc diagrams}
\label{sec:noncrossing}

For $n\in\mathbb{N}$ let $S_n$ denote the number of simple RNA secondary structures of a single-stranded nucleic acid with $n$ nucleotides. The sequence $(S_n)_{n\in\mathbb{N}}$ starts with $1$, $1$, $2$, $4$, $8$, $17$, $37$, $82$, $185$, $423$,\,\dots, and it is listed as A004148 in \cite{Sloane}. Such structures may be described using arc diagrams with non-intersecting arcs, where each node is connected to at most one arc and such that there is no arc connecting two adjacent nodes. For example, for $n=5$ there are 8 such arc diagrams, see Figure~\ref{fig:single5}.

\begin{figure}[ht]
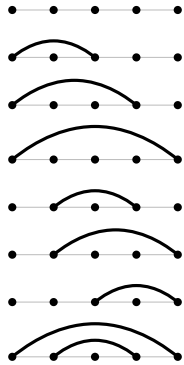

\def\ff{0.55}
\drawmatching{\ff}{5}{} \\[5pt]
\drawmatching{\ff}{5}{1/3} \\[5pt]
\drawmatching{\ff}{5}{1/4} \\[5pt]
\drawmatching{\ff}{5}{1/5} \\[5pt]
\drawmatching{\ff}{5}{2/4} \\[5pt]
\drawmatching{\ff}{5}{2/5} \\[5pt]
\drawmatching{\ff}{5}{3/5} \\[5pt]
\drawmatching{\ff}{5}{1/5,2/4}
\caption{Noncrossing arc diagrams with 5 nodes}
\label{fig:single5}
\end{figure}

\begin{remark}
If we label the nodes 1 through $n$ and think of elements connected by an arc as transpositions, then the above set of arc diagrams correspond to 3412-avoiding involutions on $[n]=\{1,\dots,n\}$ with no transpositions of the form $(i, i+1)$. For example, for $n=5$ the arc diagrams in Figure~\ref{fig:single5} correspond (from top to bottom) to the involutions $12345$, $32145$, $42315$, $52341$, $14325$, $15342$, $12543$, $54321$.
\end{remark}

\begin{theorem}[{\cite[Thm.~2.1]{Waterman}}] \label{thm:Waterman}
The sequence $(S_n)_{n\in\mathbb{N}}$ satisfies the recurrence relation:
\begin{equation*}
 S_n = S_{n-1} + \sum_{j=3}^n S_{j-2} S_{n-j} \;\text{ for } n\ge 3, 
\end{equation*}
where $S_0=1$, $S_1=1$, and $S_2=1$.
\end{theorem}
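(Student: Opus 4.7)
The plan is a standard ``first return'' decomposition of arc diagrams, organized by the status of node $1$. For $n\ge 3$, I would partition the set of diagrams counted by $S_n$ into two types according to whether node $1$ is an isolated (unpaired) node or the left endpoint of some arc.

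If node $1$ is unpaired, then deleting it yields an arbitrary noncrossing arc diagram on the nodes $\{2,\dots,n\}$ in which no arc connects adjacent nodes. Since adjacency in $\{2,\dots,n\}$ agrees with adjacency in $[n]$ restricted to this subset, such sub-diagrams are in bijection with the diagrams counted by $S_{n-1}$. This accounts for the first term in the recurrence.

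If node $1$ is connected to some node $j$, then the no-adjacent-arcs constraint forces $j\ge 3$, and the noncrossing condition forces the arc $(1,j)$ to partition the remaining nodes into two independent pieces: an ``inner'' diagram on $\{2,\dots,j-1\}$, which has $j-2$ nodes, and an ``outer'' diagram on $\{j+1,\dots,n\}$, which has $n-j$ nodes. Neither sub-diagram contains node $1$ or node $j$, so the arcs within each piece are again noncrossing with no adjacent arcs, inheriting the constraint from the ambient diagram. The number of inner diagrams is $S_{j-2}$ (with $S_0=1$ handling the empty case when $j=2$, though $j\ge 3$ here) and the number of outer diagrams is $S_{n-j}$ (again with $S_0=1$ when $j=n$). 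Summing over the admissible values $j=3,4,\dots,n$ produces $\sum_{j=3}^n S_{j-2}S_{n-j}$.

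Adding both contributions yields the stated recurrence. The only delicate point, which I would check explicitly, is that the adjacency constraint is preserved when passing to sub-diagrams on contiguous index sets: this is immediate from the fact that two indices in the sub-diagram are adjacent there iff they are adjacent in $[n]$. The initial values $S_0=S_1=S_2=1$ follow by direct inspection, noting that a diagram on $0$, $1$, or $2$ nodes cannot contain any arc (the only possible arc for $n=2$ would join adjacent nodes). I would close by verifying the recurrence against the small cases $S_3=2$, $S_4=4$, $S_5=8$, which match the list in Figure~\ref{fig:single5}.
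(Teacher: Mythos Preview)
Your argument is correct and is essentially identical to the paper's own justification: both case on whether node $1$ is isolated (contributing $S_{n-1}$) or is the left endpoint of an arc $A_{1j}$ with $j\ge 3$, with the noncrossing condition splitting the remaining nodes into an inner diagram of length $j-2$ and an outer diagram of length $n-j$. Your write-up simply adds more detail (explicit verification that adjacency is preserved on contiguous sub-intervals and a small-case check), but the decomposition is the same.
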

This is a consequence of the fact that every diagram of length $n$ either starts with an isolated node followed by a diagram of length $n-1$, or it starts with an arc $A_{1j}$ connecting the first node with a node at some position $j\ge 3$. In the latter case, one can have a diagram of length $j-2$ nested by the arc $A_{1j}$ and a diagram of length $n-j$ outside of $A_{1j}$.

In this paper, we are primarily interested in symmetric arc diagrams. More precisely, we say that an arc diagram describing a simple RNA secondary structure is {\em symmetric} if the corresponding involution is invariant under the reverse complement map.

For example, for $n=5$ the symmetric arc diagrams are the ones corresponding to the involutions $12345$, $1{4}3{2}5$, ${5}234{1}$, ${54}3{21}$. For $n=1,\dots,5$, the symmetric structures are:

\medskip
\begin{center}
\def\ff{0.55}
\small
\begin{tabular}{c@{\hskip20pt}c@{\hskip20pt}c@{\hskip20pt}c@{\hskip20pt}c}
$n=1$ & $n=2$ & $n=3$ & $n=4$ & $n=5$ \\[5pt] 
\tikz[scale=\ff]{\draw[fill] (0,0) circle (0.06);} & \drawmatching{\ff}{2}{} & \drawmatching{\ff}{3}{} & \drawmatching{\ff}{4}{} & \drawmatching{\ff}{5}{} \\[5pt]
&& \drawmatching{\ff}{3}{1/3} & \drawmatching{\ff}{4}{1/4} & \drawmatching{\ff}{5}{2/4} \\[5pt]
&&&& \drawmatching{\ff}{5}{1/5} \\[5pt]
&&&& \drawmatching{\ff}{5}{1/5,2/4}
\end{tabular}
\end{center}
\medskip

For $n\in\mathbb{N}$ let $R_n$ denote the number of simple symmetric RNA secondary structures with $n$ nucleotides. The sequence $(R_n)_{n\in\mathbb{N}}$ starts with $1, 1, 2, 2, 4, 5, 9, 12, 21, 29, 50, 71,\dots$, and it is listed as A088518 in \cite{Sloane}.

\begin{theorem} \label{thm:Rn}
The sequence $(R_n)_{n\in\mathbb{N}_0}$ satisfies the relation:
\begin{equation} \label{eqn:Rn}
 R_n = 2R_{n-2} + \sum_{j=3}^{\lfloor n/2\rfloor} S_{j-2} R_{n-2j} \;\text{ for } n\ge 6, 
\end{equation}
where $R_0=R_1=R_2=1$, $R_3=R_4=2$, and $R_5=4$.
\end{theorem}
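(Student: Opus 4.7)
The plan is to decompose a symmetric noncrossing arc diagram $D$ on $[n]$ according to the status of its leftmost node, using the reverse complement symmetry to pin down the status of the rightmost node at the same time. There are three mutually exclusive cases: (A) node $1$ is isolated, which forces node $n$ to be isolated as well; (B) node $1$ is joined to node $n$ by an arc, which is fixed pointwise by the symmetry; or (C) node $1$ is joined to some node $k$ with $3\le k<n$, in which case the symmetry forces a second arc from $n+1-k$ to $n$.

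In case (A), deleting both isolated endpoints leaves an arbitrary symmetric noncrossing arc diagram on the $n-2$ inner nodes, contributing $R_{n-2}$. In case (B), the interior nodes $2,\dots,n-1$ carry an arbitrary symmetric structure (the adjacency prohibition is preserved since consecutive integers in the sub-diagram are also consecutive in $[n]$), contributing $R_{n-2}$. In case (C), the arcs $1$-$k$ and $(n+1-k)$-$n$ are distinct; for them to be noncrossing we need $k<n+1-k$, i.e.\ $k\le\lfloor n/2\rfloor$. The two mirror arcs are then nested around the central block $\{k+1,\dots,n-k\}$. Inside the arc $1$-$k$, the sub-diagram on $\{2,\dots,k-1\}$ can be any simple noncrossing arc diagram on $k-2$ nodes, giving $S_{k-2}$ choices, and its image under the reverse complement determines the sub-diagram inside the mirror arc with no additional freedom. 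The central block, on $n-2k$ nodes, carries a symmetric noncrossing arc diagram, contributing $R_{n-2k}$. Summing over $k$ (reindexed as $j$) yields $\sum_{j=3}^{\lfloor n/2\rfloor} S_{j-2}R_{n-2j}$.

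Combining the three cases produces the stated recurrence. The base cases $R_0,\dots,R_5$ are verified by the explicit enumeration preceding the theorem.

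The main technical point lies in case (C): one must confirm that the three sub-diagrams (inside $1$-$k$, inside the mirror arc, and in the central block) are mutually independent, that the central block inherits the $D$-symmetry under the restriction of the reverse complement map, and that no arc can span across the two boundary arcs without violating the noncrossing condition. The first and third points follow from the standard nesting/disjointness dichotomy for noncrossing arcs, while the second is the observation that restricting the reverse complement on $[n]$ to $\{k+1,\dots,n-k\}$ and relabeling recovers the reverse complement on $[n-2k]$. Once these structural observations are in place, the factorization $S_{k-2}\cdot R_{n-2k}$ is immediate, and the argument is essentially the symmetric analogue of Waterman's decomposition recalled in Theorem~\ref{thm:Waterman}.
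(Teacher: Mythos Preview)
Your proof is correct and follows essentially the same decomposition as the paper: cases (A), (B), (C) are exactly the paper's cases $(i)$, $(ii)$, $(iii)$, with the same counts $R_{n-2}$, $R_{n-2}$, and $S_{j-2}R_{n-2j}$. You supply a little more justification than the paper does for why $k\le\lfloor n/2\rfloor$ is forced and why the three sub-regions decouple, but the underlying argument is identical. (One wording nit: in case~(C) the two boundary arcs are not ``nested around'' the central block but sit disjointly on either side of it; this does not affect the logic.)
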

\begin{proof}
For $n\ge 6$, there are three types of symmetric diagrams with $n$ nodes:
\begin{enumerate}[$(i)$]
\item Diagrams whose first and last nodes are isolated:
\begin{center}
\tikz[scale=0.6]{%
  \node[below=1pt] at (0,0) {\scriptsize $1$};
  \node[below=1pt] at (6,0) {\scriptsize $n$};
  \draw[gray!50] (0,0)--(6,0);
  \foreach \i in {0,1,5,6}{
    \draw[fill] (\i,0) circle (0.08);
  }
  \draw[patt1] plot [smooth, tension=1] coordinates {(1,0) (2,1) (3,0.5) (4,1) (5,0)};
}
\end{center}
\item Diagrams whose first and last nodes are connected by an arc:
\begin{center}
\tikz[scale=0.6]{%
  \node[below=1pt] at (0,0) {\scriptsize $1$};
  \node[below=1pt] at (6,0) {\scriptsize $n$};
  \draw[gray!50] (0,0)--(6,0);
  \foreach \i in {0,1,5,6}{
    \draw[fill] (\i,0) circle (0.08);
  }
  \draw[arc1] (0,0) parabola bend (3,1.3) (6,0);
  \draw[patt1] plot [smooth, tension=1] coordinates {(1,0) (2,0.8) (3,0.4) (4,0.8) (5,0)};
}
\end{center}
\item Diagrams that start with an arc $A_{1j}$ connecting the first node with a node at some position $3\le j\le n/2$:
\begin{center}
\tikz[scale=0.75]{%
  \node[below=1pt] at (0,0) {\scriptsize $\uparrow$};
  \node[below=11pt] at (0,0) {\scriptsize $1$};
  \node[below=1pt] at (4,0) {\scriptsize $\uparrow$};
  \node[below=11pt] at (4,0) {\scriptsize $j$};
  \node[below=1pt] at (8,0) {\scriptsize $\uparrow$};
  \node[below=11pt] at (8,0) {\scriptsize $n-j+1$};
  \node[below=1pt] at (12,0) {\scriptsize $\uparrow$};
  \node[below=11pt] at (12,0) {\scriptsize $n$};
  \draw[gray!50] (0,0)--(12,0);
  \foreach \i in {0,4,4.5,7.5,8,12}{
    \draw[fill] (\i,0) circle (0.08);
  }
  \draw[arc1] (0,0) parabola bend (2,0.8) (4,0);
  \draw[arc1] (8,0) parabola bend (10,0.8) (12,0);
  \draw[patt1] plot [smooth, tension=1] coordinates {(4.5,0) (5.2,0.8) (6,0.4) (6.8,0.8) (7.5,0)};
  \draw[fill,gray!30] plot [smooth, tension=1] coordinates {(0.5,0) (1,0.42) (2,0.2) (3,0.25) (3.5,0)};
  \draw[fill,gray!30] plot [smooth, tension=1] coordinates {(8.5,0) (9,0.25) (10,0.2) (11,0.42) (11.5,0)};
  \foreach \i in {0.5,3.5,8.5,11.5}{
    \draw[fill] (\i,0) circle (0.08);
  }
 }
\end{center}
\end{enumerate}

There are $R_{n-2}$ diagrams of type $(i)$ and $R_{n-2}$ diagrams of type $(ii)$. Finally, the sum over $j$ on the right-hand side of \eqref{eqn:Rn} comes from the fact that we can place any diagram (not necessarily symmetric) of length $j-2$ underneath $A_{1j}$ (and the mirror image underneath the arc $A_{n-j+1,n}$), and at positions $j+1$ through $n-j$, one can have any symmetric arc diagram of length $n-2j$.
\end{proof}

For diagrams of odd and even length, the OEIS lists two recurrences for which we provide combinatorial proofs.
\begin{proposition}
  For $n\in\mathbb{N}$, we have
  \begin{gather*}
    R_{2n+1} = R_{2n} + R_{2n-1} \,\text{ and }\; R_{2n} = R_{2n-1} + R_{2n-2} - S_{n-1}.
  \end{gather*}
\end{proposition}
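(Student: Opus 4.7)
The plan is to give two bijective proofs, one for each identity, each proceeding by case analysis of the structure near the middle of a symmetric diagram. Both arguments rest on the auxiliary observation that in any symmetric noncrossing arc diagram an arc spanning the middle must be self-symmetric, since otherwise the arc and its mirror image would interleave and cross.

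For $R_{2n+1}=R_{2n}+R_{2n-1}$, I first observe that in any $D\in R_{2n+1}$ the middle node $n+1$ is fixed by the reverse complement, so its only possible arc partner is itself; hence $n+1$ is always isolated. I then partition $R_{2n+1}$ according to whether the arc $(n,n+2)$ belongs to $D$. If $(n,n+2)\notin D$, deleting the isolated middle gives a bijection onto $R_{2n}$: the only forbidden adjacency the deletion could create is an arc $(n,n+1)$ coming from a $D$-arc $(n,n+2)$, which is excluded by hypothesis. If $(n,n+2)\in D$, the three-node hairpin consisting of $n,n+1,n+2$ together with the arc $(n,n+2)$ contracts to a single isolated node at the new middle of a symmetric $(2n-1)$-diagram, and the inverse map blows that middle isolated node back up into the hairpin.

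For $R_{2n}=R_{2n-1}+R_{2n-2}-S_{n-1}$, I analyze the central pair $(n,n+1)$ of any $D\in R_{2n}$. The self-symmetric-spanning-arc observation together with the ban on the adjacent arc $(n,n+1)$ forces $(n,n+1)$ to be either both isolated or both endpoints of outward non-spanning arcs $(i,n)$ and $(n+1,2n+1-i)$ for some $i\le n-2$. In the first case, deleting $n+1$ gives a bijection with $R_{2n-1}$, exactly as in the odd-length identity. In the second case, I delete the two central nodes together with the pair of outward arcs and insert in their place the self-symmetric spanning arc $(i,2n-1-i)$ on the resulting $(2n-2)$-diagram; the needed adjacency check, namely $(n-1,n+2)\notin D$, is automatic because such an arc would cross $(i,n)$ as soon as $i\le n-2$. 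Reading off $i$ as the left endpoint of the innermost spanning arc inverts the construction, so the second case is in bijection with those diagrams in $R_{2n-2}$ having at least one spanning arc. Since a symmetric $(2n-2)$-diagram with no spanning arc is determined by its arbitrary type (a) left half on $n-1$ nodes, there are exactly $S_{n-1}$ such diagrams, giving $R_{2n}=R_{2n-1}+(R_{2n-2}-S_{n-1})$.

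The main obstacle is the reverse direction of the bijection for the second identity: I have to check that choosing the innermost spanning arc $(i,2n-1-i)$ of an arbitrary $D'\in R_{2n-2}$ allows the insertion of the outward arcs $(i,n)$ and $(n+1,2n+1-i)$ without creating any crossing. This reduces to two remarks: by innermostness, the interior of $(i,2n-1-i)$ contains no further spanning arc and hence splits cleanly into two mirror-image type (a) subdiagrams that nest inside the inserted arcs; and any outer spanning arc of $D'$ has left endpoint $j<i$, so in $D$ it nests around $(i,n)$ and $(n+1,2n+1-i)$ rather than interlocking with them.
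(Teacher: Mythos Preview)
Your proof is correct and follows essentially the same approach as the paper's: both identities are proved by the same case split on the configuration at the centre (presence of the short arc $(n,n+2)$ for the odd case; isolated-versus-arced central pair for the even case), with the same bijective constructions (delete/insert the middle node or hairpin; merge the two central outward arcs into a single self-symmetric spanning arc). Your write-up is somewhat more explicit than the paper's in verifying that the inverse map in the even case is well-defined (identifying the innermost spanning arc and checking no crossings arise), but the underlying argument is identical.
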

\begin{proof}
The first recurrence follows from the fact that every symmetric arc diagram with an odd number of nodes has an isolated middle node. Given any arc diagram $D_{2n+1}$ with $2n+1$ nodes, there are two disjoint cases: either the node in the middle lies underneath a short arc of length 3 (i.e.\!\drawmatching{0.6}{3}{1/3}), or it does not. In the first case, one can get a diagram with $2n-1$ nodes by removing from $D_{2n+1}$ the arc around the middle node. In the second case, one can get a diagram with $2n$ nodes by removing from $D_{2n+1}$ just the middle node. 

Conversely, every symmetric diagram with $2n$ nodes can be extended by adding one single node between the two nodes in the middle, and every symmetric diagram with $2n-1$ nodes can be extended by adding a short arc around the middle node. Clearly, these two procedures give different arc diagrams with $2n+1$ nodes. 

In order to prove the recurrence relation for $R_{2n}$, let $D_{2n}$ be a symmetric arc diagram with $2n$ nodes. For the central nodes at positions $n$ and $n+1$, there are two disjoint cases:
\begin{itemize}
\item The two nodes are isolated nodes.
\item There are arcs $A_{j,n}$ and $A_{n+1,2n-j+1}$ for some $j\le n-2$.
\end{itemize}
Clearly, every $D_{2n}$ of the first type uniquely corresponds to a diagram with $2n-1$ nodes, obtained from $D_{2n}$ by merging the two central nodes. There are $R_{2n-1}$ such diagrams.

On the other hand, if $D_{2n}$ contains arcs $A_{j,n}$ and $A_{n+1,2n-j+1}$ for some $j$, we can map it to a diagram $D_{2n-2}$ with $2n-2$ nodes as follows: replace $A_{j,n}$ and $A_{n+1,2n-j+1}$ by a single arc connecting the nodes $j$ and $2n-j+1$, and then remove the two central nodes of $D_{2n}$. 

Note that the only symmetric arc diagrams with $2n-2$ nodes that are not in the image of the above map are diagrams of the form:
\smallskip
\begin{center}
\tikz[scale=0.7]{%
  \node[below=1pt] at (1,0) {\scriptsize $1$};
  \node[below=1pt] at (5,0) {\scriptsize $n-1$};
  \node[below=2.8pt] at (6,0) {\scriptsize $n$};
  \node[below=1pt] at (10,0) {\scriptsize $2n-2$};
  \draw[gray!50] (1,0)--(10,0);
  \foreach \i in {1,5,6,10}{
    \draw[fill] (\i,0) circle (0.08);
  }
  \draw[patt1] plot [smooth, tension=1] coordinates {(1,0) (2,0.7) (3,0.5) (4,1.2) (5,0)};
  \draw[patt1] plot [smooth, tension=1] coordinates {(6,0) (7,1.2) (8,0.5) (9,0.7) (10,0)};
}
\end{center}
Since there are $S_{n-1}$ such diagrams, we conclude that the number of symmetric arc diagrams $D_{2n}$ such that $D_{2n}$ contains arcs $A_{j,n}$ and $A_{n+1,2n-j+1}$ for some $j$ is $R_{2n-2} - S_{n-1}$.

In conclusion, $R_{2n} = R_{2n-1} + R_{2n-2} - S_{n-1}$, as stated.
\end{proof}

In \cite{Sloane}, Emeric Deutsch pointed out that $(R_n)_{n\in\mathbb{N}}$ also satisfies the relation:
\begin{equation*}
 R_n = F_{n} - \sum_{j=1}^{\lfloor n/2\rfloor-1} S_{j} F_{n-1-2j} \;\text{ for } n\ge 4, 
\end{equation*}
where $R_0=R_1=R_2=1$,  $R_3=2$, and $(F_n)_{n\in\mathbb{N}}$ denotes the Fibonacci sequence. This can be easily shown using the above proposition.

\section{Symmetric arc diagrams with crossings}
\label{sec:crossings}

We now turn to a larger class of arc diagrams that include crossings. 

For $n\in\mathbb{N}$ ,we let $\P_n$ be the set of arc diagrams with $n$ nodes where each node is connected to at most one arc, and such that there is no arc connecting two adjacent nodes. These arc diagrams correspond to involutions with no transpositions of the form $(i, i+1)$. Observe that a crossing of two arcs corresponds to a 3412 pattern in the involution.

There are no crossings for diagrams with less than 4 nodes, and\drawmatching{0.6}{4}{1/3,2/4} \ is the only 4-node arc diagram with crossings. If $P_n=|\P_n|$, the sequence $(P_n)_{n\in\mathbb{N}}$ starts with $1$, $1$, $2$, $5$, $13$, $37$, $112$, $363$, $1235$,\,\dots, and it is listed as A170941 in \cite{Sloane}. 

\begin{theorem}[{\cite[Thm.~5]{HaSt99}}] 
The sequence $(P_n)_{n\in\mathbb{N}}$ satisfies the recurrence relation:
\[ P_n = P_{n-1} + (n-1)P_{n-2}-P_{n-3}+P_{n-4} \;\text{ for } n\ge 5, \]
where $P_1=P_2=1$, $P_3=2$, and $P_4=5$.
\end{theorem}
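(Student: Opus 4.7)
The plan is to derive the recurrence from a closed-form expression for $P_n$ obtained by inclusion--exclusion on the forbidden adjacent transpositions. Let $I_m$ denote the number of involutions on $[m]$, and for each $i\in\{1,\dots,n-1\}$ let $B_i$ be the set of involutions on $[n]$ containing the transposition $(i,i+1)$. Since $(i,i+1)$ and $(i+1,i+2)$ share the element $i+1$, no involution contains both, so $\bigcap_{i\in S}B_i$ is nonempty precisely when $S$ is an independent set in the path on the vertices $\{1,\dots,n-1\}$. For such an $S$ of size $j$, the $j$ disjoint transpositions pin $2j$ elements and the remaining $n-2j$ elements may form an arbitrary involution, so the intersection has size $I_{n-2j}$. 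Since a path on $n-1$ vertices has $\binom{n-j}{j}$ independent subsets of size $j$, inclusion--exclusion gives
\begin{equation*}
P_n \;=\; \sum_{j\ge 0}(-1)^j\binom{n-j}{j}\,I_{n-2j}.
\end{equation*}

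With this closed form, the four-term recurrence follows by algebraic manipulation combining Pascal's identity, the classical involution recurrence $I_m=I_{m-1}+(m-1)I_{m-2}$, and the key identity
\begin{equation*}
(n-1-2j)\binom{n-1-j}{j} \;=\; (n-1-j)\binom{n-2-j}{j}
\end{equation*}
(both sides equal $(n-1-j)!/[j!\,(n-2-2j)!]$). Concretely, one substitutes the closed form for each of $P_n,P_{n-1},P_{n-2},P_{n-3},P_{n-4}$ into the difference $P_n-P_{n-1}-(n-1)P_{n-2}+P_{n-3}-P_{n-4}$; using the identities together with $k\binom{m}{k}=m\binom{m-1}{k-1}$ and repeated index shifts $j\mapsto j+1$, every term cancels and the difference collapses to $0$.

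The main obstacle is the bookkeeping of signs, boundary terms, and index shifts; the algebra is not deep but is meticulous. A purely combinatorial proof conditioning on the partner of node $n$ is also conceivable, but the adjacency constraints on the reduced set $[n-1]\setminus\{k\}$ depend on the position of $k$ in a non-uniform way, so such an argument would require an additional layer of inclusion--exclusion that essentially mirrors the algebraic computation above. The base cases $P_1=P_2=1$, $P_3=2$, $P_4=5$ are verified directly from the enumeration of admissible arc diagrams.
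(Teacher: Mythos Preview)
The paper itself does not prove this theorem; it quotes the result from \cite{HaSt99} and refers the reader to a combinatorial proof in \cite[Section~3]{GanSte11}. So there is no in-paper argument to compare against directly, only the cited bijective one.

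Your inclusion--exclusion derivation of
\[
P_n \;=\; \sum_{j\ge 0}(-1)^j\binom{n-j}{j}\,I_{n-2j}
\]
is correct and is a genuinely different route from the combinatorial argument the paper points to: that argument conditions on the partner of an extremal node and handles the adjacency constraint directly on the diagram level, whereas you trade the combinatorics for a closed form and then algebra. Each approach has its merits: yours yields an explicit formula as a byproduct, while the combinatorial one explains the individual terms of the recurrence.

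That said, your proof has a gap in its second half. You assert that substituting the closed form into $P_n-P_{n-1}-(n-1)P_{n-2}+P_{n-3}-P_{n-4}$ and applying Pascal, the involution recurrence, and the identity $(n-1-2j)\binom{n-1-j}{j}=(n-1-j)\binom{n-2-j}{j}$ makes ``every term cancel,'' but you never carry this out. The cancellation is not routine: already after one use of $I_{n-2j}=I_{n-1-2j}+(n-1-2j)I_{n-2-2j}$ followed by a Pascal split of $\binom{n-j}{j}$, one is left with a sum $\sum_{j}(-1)^{j+1}\binom{n-2-j}{j}I_{n-3-2j}$ whose binomial coefficient does \emph{not} match that of $P_{n-3}=\sum_j(-1)^j\binom{n-3-j}{j}I_{n-3-2j}$, so several further non-obvious rewritings are needed before anything telescopes. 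The ingredients you list are the right ones, but as written the argument stops precisely where the real work begins; either display the full computation or replace it by a generating-function argument. Your closing remark that a direct combinatorial proof would ``essentially mirror'' this algebra is also too pessimistic: the argument in \cite{GanSte11} obtains the four-term recurrence without any inclusion--exclusion at all.
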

A simple combinatorial proof of this theorem in terms of arc diagrams can be found in \cite[Section~3]{GanSte11}. For more information and other interesting formulas, see \cite{HaSt99,JQR08}.

For $0\le k\le n$, we let $P(n,k)$ be the number of diagrams in $\P_n$ having exactly $k$ free (isolated) nodes. A few of the terms generated by $P(n,k)$ are listed in Table~\ref{tab:PseudoK}. Clearly, we have $P_n = \sum_{k=0}^n P(n,k)$, and it is easy to check that
\[ P(1,0)=P(2,0)=P(2,1)=0 \;\text{ and }\; P(1,1)=P(2,2)=1. \]
\begin{theorem} \label{thm:pseudo_nk}
For $n\ge 3$ we have
\[ P(n,k) = (k+1)P(n-1,k+1) + P(n-1,k-1) - P(n-2,k), \]
where $P(n,k)=0$ if $k<0$ or $k>n$.
\end{theorem}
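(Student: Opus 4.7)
The plan is to classify the diagrams in $\P_n$ with $k$ isolated nodes according to the status of the last node. Either node $n$ is isolated, or it is the right endpoint of an arc $(j,n)$ with $j\le n-2$ (the inequality enforcing the no-adjacent-arc condition). Write $A$ and $B$ for the respective subsets, so $P(n,k) = |A| + |B|$. Deleting the isolated last node sets up an immediate bijection between $A$ and the set of diagrams in $\P_{n-1}$ with $k-1$ isolated nodes, so $|A| = P(n-1, k-1)$.

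For $|B|$ I would set up the following correspondence. Given $D \in B$ containing the arc $(j,n)$, remove both node $n$ and that arc to obtain $D' \in \P_{n-1}$; since $j$ is isolated in $D'$ while every other node retains its status, $D'$ has exactly $k+1$ isolated nodes, one of which is $j \neq n-1$. Conversely, from $D' \in \P_{n-1}$ with $k+1$ isolated nodes together with a distinguished isolated node $j \neq n-1$, adjoining the arc $(j,n)$ (along with a new node $n$) recovers an element of $B$. Thus $|B|$ equals the number of such pairs $(D', j)$.

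The counting is then a brief inclusion--exclusion. The total number of pairs $(D', j)$ with $D'\in \P_{n-1}$ having $k+1$ isolated nodes and $j$ any isolated node of $D'$ is $(k+1)P(n-1, k+1)$, since each such $D'$ contributes exactly $k+1$ choices of $j$. From this I would subtract the pairs with $j = n-1$: these correspond (via deletion of the now-isolated last node of $D'$) bijectively to $\P_{n-2}$ diagrams with $k$ isolated nodes, giving an excess of $P(n-2,k)$. Combining the two cases yields the stated recurrence.

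The step requiring the most care is the correction term: the ``attach an arc to $n$'' map is a bijection only after excluding the choice $j = n-1$, which would create the forbidden arc $(n-1,n)$. The main obstacle is therefore to justify that the set of excluded pairs has cardinality exactly $P(n-2, k)$, which follows cleanly from the ``remove an isolated last node'' bijection between diagrams in $\P_{n-1}$ with $k+1$ isolated nodes (one of which is $n-1$) and diagrams in $\P_{n-2}$ with $k$ isolated nodes.
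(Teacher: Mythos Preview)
Your proof is correct and follows essentially the same approach as the paper's: both classify by the status of an endpoint (you use node $n$, the paper uses node $1$) and account for the no-adjacent-arc constraint by excluding the forbidden choice of partner. The only organizational difference is that the paper splits the ``endpoint is arc'd'' case into two subcases according to whether the neighboring node is isolated, whereas you handle it in one stroke via inclusion--exclusion on the distinguished node $j$; the resulting arithmetic is identical.
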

\begin{proof}
For $n\ge 3$ and $0\le k\le n$, the set of diagrams counted by $P(n,k)$ can be split into three disjoint sets (possibly empty):
\begin{enumerate}[\;$(i)$]
\item Diagrams whose first node is isolated.
\item Diagrams whose first node is part of an arc and whose second node is free.
\item Diagrams where neither one of the first two nodes is isolated.
\end{enumerate}
For example, for $n=5$ the above split gives the following 3 groups:

\medskip
\begin{center}
\def\ff{0.55}
\begin{tabular}{ccc}  
\parbox[t]{.22\textwidth}{
\drawmatching{\ff}{5}{} \\[4pt]
\drawmatching{\ff}{5}{2/4} \\[5pt]
\drawmatching{\ff}{5}{2/5} \\[4pt]
\drawmatching{\ff}{5}{3/5} \\[4pt]
\drawmatching{\ff}{5}{2/4,3/5}
}
&
\parbox[t]{.22\textwidth}{
\drawmatching{\ff}{5}{1/3} \\[5pt]
\drawmatching{\ff}{5}{1/4} \\[5pt]
\drawmatching{\ff}{5}{1/5} \\[5pt]
\drawmatching{\ff}{5}{1/4,3/5}
}
&
\parbox[t]{.22\textwidth}{
\drawmatching{\ff}{5}{1/3,2/4} \\[5pt]
\drawmatching{\ff}{5}{1/3,2/5} \\[5pt] 
\drawmatching{\ff}{5}{1/4,2/5} \\[5pt]
\drawmatching{\ff}{5}{1/5,2/4}
}
\end{tabular}
\end{center}
\medskip
There are $P(n-1,k-1)$ diagrams of the first type. Moreover, every diagram of type $(ii)$ can be constructed from a diagram $D$ with $n-2$ nodes, having $k$ of them free, by adding two nodes to the left of $D$ and then drawing an arc from the left-most added node to any of the $k$ free nodes of $D$. This process gives $kP(n-2,k)$ diagrams of the second type.

Finally, there are $P(n-1,k+1)-P(n-2,k)$ arc diagrams with $n-1$ nodes, having $k+1$ of them free, and such that the left-most node is not free. Every diagram of type $(iii)$ can be constructed from one of these by adding a node to the left and drawing an arc from that node to one of the existing $k+1$ free nodes. In other words, there are $(k+1)\left(P(n-1,k+1)-P(n-2,k)\right)$ diagrams of the third type.

The statement follows by adding the three quantities.
\end{proof}

\begin{table}[ht]
\small
\def\R{\rule[-1ex]{0ex}{3.5ex}}
\begin{tabular}{c|@{\hskip8pt}rrrrrrrr|r}
  $n\backslash k$ & 0 & 1 & 2 & 3 & 4 & 5 & 6 & 7 & $\Sigma$ \\[1pt] \hline
  \R 1&0&1&&&&&&&1 \\
  \R 2&0&0&1&&&&&&1 \\
  \R 3&0&1&0&1&&&&&2 \\
  \R 4&1&0&3&0&1&&&&5 \\
  \R 5&0&6&0&6&0&1&&&13 \\
  \R 6&5&0&21&0&10&0&1&&37 \\
  \R 7&0&41&0&55&0&15&0&1&112
\end{tabular}
\vspace{2ex}
\caption{Numbers generated by $P(n,k)$.}
\label{tab:PseudoK}
\end{table}

We now address the enumeration of symmetric arc diagrams in $\P_n$. Recall that such a diagram is symmetric if the associated involution is invariant under the reverse complement map. For example, for $n=5$ there are 5 such symmetric elements:
\begin{center}
\def\ff{0.55}
\drawmatching{\ff}{5}{} \\[5pt]
\drawmatching{\ff}{5}{2/4} \\[5pt]
\drawmatching{\ff}{5}{1/5} \\[5pt]
\drawmatching{\ff}{5}{1/5,2/4} \\[5pt]
\drawmatching{\ff}{5}{1/4,2/5} 
\end{center}

\medskip
For $n\in\mathbb{N}$ and $0\le k\le n$, let $Q(n,k)$ be the number of symmetric arc diagrams with $n$ nodes, $k$ of which are isolated, and such that each of the remaining $n-k$ nodes is connected by an arc to exactly one nonadjacent node. In particular, $n$ and $k$ must have the same parity. The total number of symmetric arc diagrams in $\P_n$ is then given by
\[ Q_n = \!\!\sum\limits_{\stackrel{0\le k\le n}{k\equiv n\hspace{-6pt}\pmod 2}} \!\!Q(n,k). \]
The sequence $(Q_n)_{n\in\mathbb{N}}$ starts with $1, 1, 2, 3, 5, 11, 16, 43, 59, 179, 238, 801,\dots$.

The middle node of a symmetric diagram with $2n+1$ nodes is always isolated and its adjacent nodes are either connected to each other by an arc or they are not. Each diagram of the latter type corresponds to a diagram with $2n$ nodes, obtained by removing the middle node, and each diagram of the first type corresponds to a diagram with $2n-1$ nodes obtained by removing the short arc around the middle node. Therefore,
\begin{equation} \label{eq:Q_odd}
 Q_{2n+1} = Q_{2n} + Q_{2n-1}.
\end{equation}

We now focus on diagrams with an even number of nodes. One can easily check that
\[ Q(2,0) = 0, \; Q(2,2) = 1, \text{ and } Q(4,0) = Q(4,2) = Q(4,4) = 1. \]
\begin{theorem}
For $n\ge 3$ and $k$ even, we have
\begin{equation*}
 Q(2n,k) = (k+2)Q(2n-2,k+2) + Q(2n-2,k) + Q(2n-2,k-2) - Q(2n-4,k),
\end{equation*}
where $Q(2n,k)=0$ if $k<0$ or $k>2n$
\end{theorem}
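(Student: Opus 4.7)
The plan is to adapt the three-case decomposition used in the proof of Theorem~\ref{thm:pseudo_nk} to the symmetric setting, replacing the single end-of-diagram argument with one that simultaneously handles positions $1$ and $2n$, which are tied by the reverse-complement symmetry. Concretely, I will split the set counted by $Q(2n,k)$ according to the arc (if any) incident to node $1$: either (A) node $1$ is isolated, (B1) node $1$ is joined by an arc to node $2n$, or (B2) node $1$ is joined by an arc to some node $j$ with $3\le j\le 2n-1$. In case (A), node $2n$ is also isolated by symmetry; in case (B1), the arc $(1,2n)$ is itself self-symmetric; in case (B2), the symmetric image of $(1,j)$ is the distinct arc $(2n+1-j,2n)$, since coincidence would force $j=2n$.

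For cases (A) and (B1), I will argue that deleting nodes $1$ and $2n$ and relabeling positions gives a bijection with the symmetric diagrams in $\P_{2n-2}$ having, respectively, $k-2$ and $k$ free nodes. The reverse-complement symmetry descends because the mirror of new position $i-1$ is new position $2n-i$, matching the image of the old mirror $2n+1-i$. Hence these two cases contribute $Q(2n-2,k-2)$ and $Q(2n-2,k)$.

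The heart of the argument---and the step I expect to be the main obstacle---is case (B2). Deleting nodes $1$ and $2n$ turns their former partners at positions $j$ and $2n+1-j$ into isolated nodes of the reduced diagram, so the image $D'$ is a symmetric diagram on $2n-2$ nodes with $k+2$ free nodes equipped with a distinguished mirror pair of free nodes (the images of $j$ and $2n+1-j$). To invert the construction, for each $D'$ counted by $Q(2n-2,k+2)$ I pick one of its mirror pairs of free nodes together with an orientation---i.e., which element of the pair is declared to be the former partner of node $1$. Since $D'$ has even length, its $k+2$ free nodes break into $(k+2)/2$ mirror pairs, each affording two orientations, so \emph{a priori} this produces $(k+2)\,Q(2n-2,k+2)$ extensions.

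The adjacency constraint, however, forbids the arc $(1,2)$, so the orientation that would give $j=2$ must be discarded. In small-diagram coordinates this is precisely the outermost mirror pair $\{1,2n-2\}$ of $D'$ oriented so that position~$1$ of $D'$ becomes the partner of node~$1$ in the extended diagram; the opposite orientation of the same pair gives the legal value $j=2n-1$. Thus exactly one forbidden extension must be removed for every $D'$ whose first and last positions are free, and such $D'$ are in bijection with symmetric diagrams in $\P_{2n-4}$ having $k$ free nodes (delete those two free endpoints). The case (B2) contribution is therefore $(k+2)Q(2n-2,k+2)-Q(2n-4,k)$, and summing the three contributions produces the stated recurrence; the convention $Q(2n,k)=0$ for $k<0$ or $k>2n$ absorbs the degenerate boundary terms.
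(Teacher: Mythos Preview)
Your proof is correct. Cases (A) and (B1) coincide with the paper's cases $(i)$ and $(ii)$, yielding $Q(2n-2,k-2)$ and $Q(2n-2,k)$ as in the paper. The difference lies in how the remaining case is handled: the paper further splits your case (B2) according to whether node~$2$ is isolated, obtaining $(k+1)Q(2n-4,k)$ for the sub-case with node~$2$ isolated and $(k+2)\bigl[Q(2n-2,k+2)-Q(2n-4,k)\bigr]$ for the sub-case with node~$2$ non-isolated, and then combines these algebraically. You instead treat (B2) in one stroke by deleting nodes $1$ and $2n$, counting all $k+2$ ways to reattach node~$1$ to a free node of the reduced diagram, and subtracting the single illegal choice $j=2$, which is available precisely when the reduced diagram has its first (hence also last) node free. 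Your overcount-and-correct argument is a genuine simplification over the paper's sub-case split: it avoids the somewhat delicate bookkeeping around the option ``connect node~$1$ to node~$2n-1$'' that the paper folds into its case~$(iii)$, and it makes the appearance of the correction term $-\,Q(2n-4,k)$ transparent as exactly the count of reduced diagrams admitting the forbidden reattachment.
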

\begin{proof}
The above identity follows from a simple combinatorial argument similar to the one we used for the proof of Theorem~\ref{thm:pseudo_nk}.

For $n\ge 3$ and $0\le k\le 2n$, the set of diagrams counted by $Q(2n,k)$ can be split into four disjoint sets (possibly empty) depending on the nature of the first and second nodes:
\begin{enumerate}[\;$(i)$]
\item The first node is isolated.
\item The first node is connected to the last node by a single arc.
\item The first node is connected to a node different from the last node, and the second node is isolated.
\item The first node is connected to a node different from the last, and the second node is not isolated.
\end{enumerate}
There are $Q(2n-2,k-2)$ arc diagrams of the first type and $Q(2n-2,k)$ of the second type. Now, every diagram of the third type can be constructed as follows: Take a symmetric diagram $D$ with $2n-4$ nodes, having $k$ of them free, and extend it with 4 nodes (2 on each side) to create a new diagram $D'$ with $2n$ nodes. The first node of $D'$ can then be connected to any of the $k$ free nodes of $D$, or to the second to last node of $D'$. As a last step, connect the last node of $D'$ to a free node in order to create a symmetric diagram $D''$. This construction generates $(k+1)Q(2n-4,k)$ diagrams of the third type.

Finally, there are $(k+2)[Q(2n-2,k+2)-Q(2n-4,k)]$ diagrams of the fourth type. This can be seen with a similar argument as before. Just observe that every diagram of the fourth type may be constructed from one of the $Q(2n-2,k+2)-Q(2n-4,k)$ arc diagrams with $2n-2$ nodes, having $k+2$ of them free, and such that the first node is not isolated. The claimed recurrence then follows by combining the four disjoint sets.
\end{proof}

\begin{table}[ht!]
\small
\def\R{\rule[-1ex]{0ex}{3.5ex}}
\begin{tabular}{c|@{\hskip8pt}rrrrrrrr|r}
  $n\backslash k$ & 0 & 2 & 4 & 6 & 8 & 10 & 12 & 14 & $\Sigma$ \\[1pt] \hline
  \R 1&0&1&&&&&&&1 \\
  \R 2&1&1&1&&&&&&3 \\
  \R 3&3&5&2&1&&&&&11 \\
  \R 4&12&15&12&3&1&&&&43 \\
  \R 5&39&70&43&22&4&1&&&179 \\
  \R 6&167&266&233&94&35&5&1&&801 \\
  \R 7&660&1295&1020&585&175&51&6&1&3793
\end{tabular}
\vspace{2ex}
\caption{Numbers generated by $Q(2n,k)$.}
\label{tab:SymPseudoK}
\end{table}

In particular, we have $Q(2n,2n-2) = n-1$ for $n\ge 1$, and the sequence $(a_n)_{n\in\mathbb{N}}$ defined by $a_n=Q(2(n+1),2(n-1))$ satisfies the recurrence relation
\[ a_1=1, \quad a_n = a_{n-1} + 3n-2 \;\text{ for } n\ge 2, \]
thus it is the sequence of pentagonal numbers (see \cite[A000326]{Sloane}).

We finish this section by pointing out that, based on computational data up to $n=1000$, the sequence $(q_n)_{n\in\mathbb{N}}$ defined by $q_n = Q_{2n}$ seems to satisfy the  recurrence relation:
\[ q_n = 3q_{n-1}+2(n-2)q_{n-2}-2(n-2)q_{n-3}+3q_{n-4}-q_{n-5} \;\text{ for } n\ge 6, \]
where $q_1=1$, $q_2=3$, $q_3=11$, $q_4=43$, and $q_5=179$.

\section{Symmetric arc diagrams of Motzkin type}
\label{sec:Motzkin}

In this section, we focus on noncrossing arc diagrams that have no arc connecting two adjacent nodes, and where nonnesting arcs are allowed to intersect at a node. These arc diagrams correspond to noncrossing partitions of $[n]$ into blocks with no consecutive integers, and they are counted by the Motzkin numbers. For example, for $n=5$ we have the 8 diagrams shown in Figure~\ref{fig:single5} together with the diagram:

\begin{center}  
  \drawmatching{0.7}{5}{1/3,3/5}
\end{center}

Let $\M_n$ denote the set of such diagrams with $n$ nodes and let $M_n=|\M_n|$. With a similar argument as the one used to prove Theorem~\ref{thm:Waterman}, one can show that the sequence $(M_n)_{n\in\mathbb{N}}$ satisfies the recurrence relation:
\begin{equation*} 
 M_n = M_{n-1} + \sum_{j=3}^{n} M_{j-2} M_{n-j+1} \text{ for } n\ge 3, 
\end{equation*}
where $M_1=M_2=1$. As mentioned above, $(M_n)_{n\in\mathbb{N}}$ is the sequence of Motzkin numbers $1, 1, 2, 4, 9, 21, 51, 127, 323, 835, \dots$, see \cite[A001006]{Sloane}.

For this reason, we call this class of diagrams {\em Motzkin arc diagrams}. As expected, there is a natural bijection $\varphi$ between the set of Motzkin paths\footnote{Lattice paths with steps $\U=(1,1)$, $\D=(1,-1)$, and $\H=(1,0)$, staying weakly above the $x$-axis.} from $(0,0)$ to $(n-1,0)$ and the set $\M_n$. The map $\varphi$ works as follows: Given a Motzkin path with $n-1$ steps, project its vertices down to the baseline of the diagram, and for each matching pair of up and down steps on the path, draw an arc on the diagram from the node corresponding to the beginning of the up-step to the node corresponding to the end of the matching down-step. For example:

\begin{equation}\label{eqn:phi}
\begin{tikzpicture}[scale=0.65,baseline=-18pt]
\draw[thin,gray!50] (1,0)--(9,0);
\def\dis{2.5}
\foreach \x/\y in {1/0,2/1,3/2,4/1,5/1,6/0,7/1,8/1,9/0}{
	\draw[thick,dashed,orange] (\x,\y) -- (\x,-\dis);
	\draw[fill] (\x,\y) circle (0.06);
}
\draw[thick] (1,0)--(3,2)--(4,1)--(5,1)--(6,0)--(7,1)--(8,1)--(9,0);
\draw[gray!50] (1,-\dis)--(9,-\dis);
\foreach \i in {1,...,9} {%
	\draw[fill] (\i,-\dis) circle (0.08);
}
\foreach \x/\y in {1/6,2/4,6/9} {
	\pgfmathsetmacro\s{\x+\y}
	\pgfmathsetmacro\d{\y-\x}
	\draw[arc1] (\x,-\dis) parabola bend (0.5*\s,0.2*\d-\dis) (\y,-\dis);
}
\draw[->] (9.5,0.3) to[out=-40, in=40] (9.5,-1.5);
\node[right=1pt] at (10,-0.5) {\small $\varphi$};
\end{tikzpicture}
\end{equation}

\bigskip\noindent
This map is invertible and preserves symmetry, cf.\ \cite[Exercise~6.38]{Sta99}. 

Let $\L_n$ be the set of symmetric Motzkin arc diagrams with $n$ nodes, and let $L_n=|\L_n|$. 
\begin{theorem}
The sequence $(L_n)_{n\in\mathbb{N}}$ satisfies the relation
\begin{equation} \label{eqn:Ln}
 L_n = 2L_{n-2} + \sum_{j=3}^{\lfloor \frac{n+1}{2}\rfloor} M_{j-2} L_{n+2-2j} \text{ for } n>4, 
\end{equation}
where $L_0=L_1=L_2=1$, and $L_3=L_4=2$.
\end{theorem}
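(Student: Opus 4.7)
The plan is to adapt the combinatorial case analysis used in Theorem~\ref{thm:Rn} to the Motzkin setting, taking care of the distinctive feature of $\L_n$ that two nonnesting arcs are permitted to share a single endpoint. Given $D \in \L_n$, I would condition on the role of the leftmost node $1$; self-complementarity of the associated partition then dictates the role of the rightmost node $n$.

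First, I would partition $\L_n$ into three disjoint classes: \emph{(A)} node $1$ is isolated; \emph{(B)} node $1$ is joined to node $n$ by the arc $A_{1,n}$; and \emph{(C)} node $1$ is the left endpoint of an arc $A_{1,j}$ with $3 \le j < n$. In class (A), self-complementarity forces node $n$ to be isolated as well, and removing the two extreme nodes produces a symmetric Motzkin diagram on $n-2$ nodes, giving $L_{n-2}$. In class (B), the interior nodes $2,\dots,n-1$ carry an arbitrary symmetric Motzkin diagram, again contributing $L_{n-2}$. These two classes together account for the $2L_{n-2}$ term.

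For class (C), self-complementarity forces the mirror arc $A_{n+1-j,\,n}$ to be present. Because Motzkin arc diagrams forbid crossings, this pair can coexist only when $j \le n+1-j$, i.e.\ $j \le \lfloor (n+1)/2 \rfloor$; equality corresponds to the (odd $n$) case in which the two arcs meet at the central node. Inside the arc $A_{1,j}$, the $j-2$ internal nodes $2,\dots,j-1$ may carry any Motzkin diagram, contributing the factor $M_{j-2}$; the interior of the mirror arc is then determined by reflection. The remaining segment of nodes $j,j+1,\dots,n+1-j$ may carry any symmetric Motzkin diagram on its $n-2j+2$ nodes, contributing $L_{n-2j+2}$. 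Summing over $j$ in the allowed range and combining with (A) and (B) produces the stated recurrence.

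The main obstacle I anticipate is justifying that the middle segment really behaves as an independent symmetric Motzkin diagram on $n-2j+2$ nodes. The endpoints $j$ and $n+1-j$ of the segment already serve as endpoints of the outer arcs $A_{1,j}$ and $A_{n+1-j,\,n}$, yet they must still be allowed to act as endpoints of arcs internal to the segment; this is precisely the Motzkin sharing-of-endpoints convention, and it mirrors the reason why $M_{n-j+1}$ (rather than $M_{n-j}$) appears in the recurrence for $M_n$ stated in Section~\ref{sec:Motzkin}. One must also check that no arc of $D$ can straddle the boundary of the middle segment without producing a forbidden crossing with an outer arc, which pins the factorization $M_{j-2}\,L_{n-2j+2}$ in place and simultaneously rules out any extra contributions from large $j$.
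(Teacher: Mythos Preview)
Your proposal is correct and follows essentially the same approach as the paper's own proof: the paper explicitly says the argument is the one from Theorem~\ref{thm:Rn} with the third case replaced by exactly the configuration you describe, in which the outer arcs $A_{1,j}$ and $A_{n+1-j,n}$ share their inner endpoints with a symmetric Motzkin sub-diagram on nodes $j,\dots,n+1-j$. Your additional remarks about why the endpoints $j$ and $n+1-j$ may still participate in the middle factor, and why no arc can straddle the boundary without forcing a crossing, are precisely the justifications the paper leaves implicit.
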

\begin{proof}
The proof follows almost the same argument as the proof of Theorem~\ref{thm:Rn}, except that the third type of diagrams described there should be replaced by arc diagrams of the form:
\begin{center}
\tikz[scale=0.75]{%
  \node[below=1pt] at (0,0) {\scriptsize $\uparrow$};
  \node[below=11pt] at (0,0) {\scriptsize $1$};
  \node[below=1pt] at (4,0) {\scriptsize $\uparrow$};
  \node[below=11pt] at (4,0) {\scriptsize $j$};
  \node[below=1pt] at (8,0) {\scriptsize $\uparrow$};
  \node[below=11pt] at (8,0) {\scriptsize $n-j+1$};
  \node[below=1pt] at (12,0) {\scriptsize $\uparrow$};
  \node[below=11pt] at (12,0) {\scriptsize $n$};
  \draw[thin,gray!50] (0,0)--(12,0);
  \foreach \i in {0,4,8,12} {%
	\draw[fill] (\i,0) circle (0.08);
  }  
  \draw[arc1] (0,0) parabola bend (2,0.8) (4,0);
  \draw[arc1] (8,0) parabola bend (10,0.8) (12,0);
  \draw[patt1] plot [smooth, tension=1] coordinates {(4,0) (5,0.8) (6,0.4) (7,0.8) (8,0)};
  \draw[fill,gray!30] plot [smooth, tension=1] coordinates {(0.5,0) (1,0.42) (2,0.2) (3,0.25) (3.5,0)};
  \draw[fill,gray!30] plot [smooth, tension=1] coordinates {(8.5,0) (9,0.25) (10,0.2) (11,0.42) (11.5,0)};
  \foreach \i in {0.5,3.5,8.5,11.5} {%
	\draw[fill] (\i,0) circle (0.08);
  }  
 }
\end{center}

There are $L_{n-2}$ Motzkin diagrams of type $(i)$ and $L_{n-2}$ Motzkin diagrams of type $(ii)$ (see proof of Theorem~\ref{thm:Rn}). The sum over $j$ on the right-hand side of \eqref{eqn:Ln} comes from the fact that we can place any Motzkin diagram of length $j-2$ underneath $A_{1j}$, and at positions $j$ through $n-j+1$, one can have any symmetric Motzkin diagram of length $n+2-2j$.
\end{proof}

The first few terms of $(L_n)_{n\in\mathbb{N}}$ are given by 
\[ 1, 1, 2, 2, 5, 5, 13, 13, 35, 35, 96, 96, 267, 267, 750, 750,\dots \]
This suggests a bijection between $\L_{2n}$ and $\L_{2n-1}$. Indeed, any symmetric Motzkin diagram $D_{2n}$ with $2n$ nodes can be mapped into a symmetric Motzkin diagram with $2n-1$ nodes by merging the two central nodes of $D_{2n}$. This map is clearly bijective.

Moreover, the sequence $1, 2, 5, 13, 35, 96, 267, 750,\dots$ (say $(L_{2n-1})_{n\in\mathbb{N}}$) appears to be the same as \cite[A005773]{Sloane}. This is indeed the case:

\begin{theorem}
Let $(a_n)_{n\in\mathbb{N}_0}$ be defined by $a_0=1$ and $a_n=L_{2n-1}$ for $n\ge 1$. Then
\begin{equation} \label{eqn:oddLn}
  a_n = \sum_{j=0}^{n-1} M_{j+1} a_{n-1-j},
\end{equation}
where $M_{j+1}$ is the number of Motzkin paths with $j$ steps. Moreover, for $n\ge 1$, we have
\[ a_{n+1} = 3a_{n} - M_n. \]
\end{theorem}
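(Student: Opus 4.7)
The plan is to identify the sequence $(a_n)$ with the sequence counting Motzkin prefixes (lattice paths from the origin with steps $\U$, $\D$, $\H$ that remain weakly above the $x$-axis) and then derive both recurrences from standard decompositions. Via the symmetry-preserving bijection $\varphi$ from \eqref{eqn:phi}, I identify $\L_{2n-1}$ with the set of Motzkin paths of length $2n-2$ that are invariant under reversal composed with the swap $\U \leftrightarrow \D$. Such a symmetric path is determined by its first $n-1$ steps, and these initial steps form an arbitrary Motzkin prefix: given heights $h_0 = 0, h_1, \ldots, h_{n-1}$ of such a prefix, the symmetric extension has heights $H_i = h_i$ for $i \leq n-1$ and $H_{n-1+j} = h_{n-1-j}$ for $1 \leq j \leq n-1$, which automatically stay $\geq 0$ and end at $H_{2n-2} = 0$. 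Hence, for $n \geq 1$, $a_n = L_{2n-1}$ equals the number of Motzkin prefixes of length $n-1$, and the convention $a_0 = 1$ covers the empty case.

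For \eqref{eqn:oddLn}, I decompose a Motzkin prefix $P$ of length $n-1$ by the largest index $k \in \{0, 1, \ldots, n-1\}$ at which $P$ has height zero. The initial segment of $P$ from the origin to vertex $k$ is a complete Motzkin path of length $k$, giving $M_{k+1}$ possibilities. The remainder of length $n-1-k$ is either empty (when $k = n-1$, contributing $a_0 = 1$) or starts with a $\U$-step and thereafter stays strictly above the $x$-axis; in the latter case, shifting down by one unit identifies it with an arbitrary Motzkin prefix of length $n-2-k$, counted by $a_{n-1-k}$. Summing over $k$ (writing $j = k$) produces $a_n = \sum_{j=0}^{n-1} M_{j+1} a_{n-1-j}$.

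For the second recurrence, I decompose a Motzkin prefix of length $n$ by its final step. Appending $\H$ or $\U$ to an arbitrary Motzkin prefix of length $n-1$ yields a valid prefix, contributing $2a_n$ paths in total. Appending $\D$ requires the preceding prefix to end at height at least $1$, i.e., to not be a Motzkin path of length $n-1$; this excludes $M_n$ of the $a_n$ options, yielding $a_n - M_n$ additional paths. Adding the counts gives $a_{n+1} = 3a_n - M_n$. The main hurdle is the initial identification of $\L_{2n-1}$ with Motzkin prefixes of length $n-1$; once this reformulation is in place, both recurrences follow from transparent last-return and last-step decompositions.
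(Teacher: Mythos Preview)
Your proof is correct. The identification of $a_n$ with the number of Motzkin prefixes of length $n-1$ is exactly what the paper uses as well (phrased there as ``symmetric Motzkin paths with $2n-2$ steps''), and your last-return decomposition for \eqref{eqn:oddLn} is the same decomposition the paper carries out, just expressed on the half-path rather than on the full symmetric path: the paper's three cases $\U\PM\D$, $\PM_{j+1}\U\PM\D\PM'_{j+1}$, and $\PM_n\PM'_n$ correspond precisely to your cases $k=0$, $1\le k\le n-2$, and $k=n-1$.

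Where you genuinely diverge is in the proof of $a_{n+1}=3a_n-M_n$. The paper does not argue combinatorially here; instead it substitutes $n\mapsto 2n+1$ into the recurrence \eqref{eqn:Ln} for $L_n$, rewrites the resulting sum, and subtracts \eqref{eqn:oddLn} to isolate the $M_n$ term. Your last-step argument on prefixes (append $\H$ or $\U$ freely, append $\D$ only when not at height zero) is a cleaner, self-contained bijective proof that avoids any appeal to the earlier theorem on $L_n$. The trade-off is that the paper's route exhibits the compatibility between the two recurrences for $L_n$ and for $a_n$, whereas your route shows that the identity $a_{n+1}=3a_n-M_n$ is really a statement about Motzkin prefixes alone and needs nothing from the arc-diagram side once the identification is made.
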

\begin{proof}
First observe that by means of \eqref{eqn:phi}, the set $\L_{2n-1}$ is in one-to-one correspondence with the set of symmetric Motzkin paths with $2n-2$ steps. We split the latter set into $n$ disjoint sets according to the length of the longest initial Motzkin factor:

\begin{itemize}
\item[$\circ$] There are $a_{n-1}$ paths of the form $\U\PM\D$, where $\PM$ is a symmetric Motzkin path.
\item[$\circ$] There are $M_n$ paths of the form $\PM_{n}\PM_{n}'$, where $\PM_{n}$ is a Motzkin path from $(0,0)$ to $(n-1,0)$, and $\PM_{n}'$ is its mirror image going from $(n-1,0)$ to $(2n-2,0)$.
\item[$\circ$] The remaining paths are of the form $\PM_{j+1}\U \PM \D\PM_{j+1}'$, where $\PM_{j+1}$ is a Motzkin path from $(0,0)$ to $(j,0)$ for some $1\le j\le n-2$, $\PM$ is a symmetric Motzkin path from $(j+1,1)$ to $(2n-3-j,1)$, and $\PM_{j+1}'$ is the mirror image of $\PM_{j+1}$ going from $(2n-2-j,0)$ to $(2n-2,0)$. There are $M_{j+1} a_{n-1-j}$ such paths since $\PM$ has $2(n-1-j) - 2$ steps.
\end{itemize}

Formula \eqref{eqn:oddLn} then follows by adding all the terms over $j$ from 0 to $n-1$.

Finally, the simpler relation $a_{n+1} = 3a_{n} - M_n$ is a direct consequence of \eqref{eqn:Ln} and \eqref{eqn:oddLn}. Indeed, replacing $n$ by $2n+1$ in \eqref{eqn:Ln}, we get
\begin{align*}
 a_{n+1} = L_{2n+1} &= 2L_{2n-1} + \sum_{j=3}^{n+1} M_{j-2} L_{2n+3-2j} \\
 &= 2a_{n} + \sum_{j=3}^{n+1} M_{j-2} a_{n+2-j} = 2a_{n} + \sum_{j=0}^{n-2} M_{j+1} a_{n-1-j}.
\end{align*}
Together with \eqref{eqn:oddLn} this implies $a_{n+1} = 2a_{n} + (a_n - M_n) = 3a_{n} - M_n$.
\end{proof}

We finish this section with a bijective proof of a statement by John Layman who claimed in \cite[A005773]{Sloane} that the above sequence $(a_n)_{n\in\mathbb{N}}$ also enumerates the set of base 3 (ternary) $n$-digit numbers with digit sum equal to $n$. We denote this set by $\T_n$.

Note that, since symmetric Motzkin paths with $2n-2$ steps are uniquely determined by the first $n-1$ steps, the map $\varphi$ from \eqref{eqn:phi} induces a bijection $\varphi_\ell$ from $\L_{2n-1}$ to the set of left factors of length $n-1$ of Motzkin paths. Furthermore, if we use the numbers 2, 1, and 0 to denote the steps $(1,1)$, $(1,0)$, and $(1,-1)$, respectively, then every left factor of a Motzkin path may be represented by a number in base 3. We denote the latter map by $\tau$.

\begin{figure}[ht]
\def\ff{0.55}
\begin{tabular}{c@{\hskip25pt}c@{\hskip15pt}c@{\hskip20pt}c@{\hskip20pt}c}
\drawmatching{\ff}{5}{} & \tikz[baseline=-1]{\draw[dashed,->] (0,0)--(0.8,0); \node[above=1pt] at (0.4,0) {\small $\varphi_\ell$}} & \drawpath{0.5}{2}{0}{0,0,0} &  \tikz[baseline=-1]{\draw[dashed,->] (0,0)--(0.8,0); \node[above=1pt] at (0.4,0) {\small $\tau$}} & {\small\tt 11} \\[5pt]
\drawmatching{\ff}{5}{2/4} & \tikz[baseline=-5]{\draw[dashed,->] (0,0)--(0.8,0)} & \drawpath{0.45}{2}{1}{0,0,1} &  \tikz[baseline=-5]{\draw[dashed,->] (0,0)--(0.8,0)} & {\small\tt 12} \\[5pt]
\drawmatching{\ff}{5}{1/5} & \tikz[baseline=-5]{\draw[dashed,->] (0,0)--(0.8,0)} & \drawpath{0.45}{2}{1}{0,1,1} &  \tikz[baseline=-5]{\draw[dashed,->] (0,0)--(0.8,0)} & {\small\tt 21} \\[5pt]
\drawmatching{\ff}{5}{1/3,3/5} & \tikz[baseline=-5]{\draw[dashed,->] (0,0)--(0.8,0)} & \drawpath{0.45}{2}{1}{0,1,0} &  \tikz[baseline=-5]{\draw[dashed,->] (0,0)--(0.8,0)} & {\small\tt 20} \\[5pt]
\drawmatching{\ff}{5}{1/5,2/4} & \tikz[baseline=-5]{\draw[dashed,->] (0,0)--(0.8,0)} & \drawpath{0.45}{2}{2}{0,1,2} &  \tikz[baseline=-5]{\draw[dashed,->] (0,0)--(0.8,0)} & {\small\tt 22}
\end{tabular}
\vspace{1ex}
\caption{Elements of $\L_5$ and their images under $\varphi_\ell$ and $\tau\circ \varphi_\ell$.}
\label{fig:case5}
\end{figure}

\begin{theorem} 
The sets $\L_{2n-1}$ and $\T_n$ are equinumerous. 
\end{theorem}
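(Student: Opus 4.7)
My plan is to biject $\T_n$ with the set of Motzkin prefixes of length $n-1$; composing with the bijection $\varphi_\ell$ from $\L_{2n-1}$ to those prefixes already established then yields the desired bijection.

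Under $\tau$'s encoding $2\mapsto U$, $1\mapsto H$, $0\mapsto D$, each word $w=w_1\cdots w_n \in \T_n$ is a lattice path of length $n$ from $(0,0)$ to $(n,0)$ whose first step is not $D$: the condition $\sum w_i=n$ forces $\#U=\#D$, and $w_1\neq 0$ excludes $D$ as first step. Stripping $w_1$ leaves an unconstrained lattice path of length $n-1$ ending at height $1-w_1 \in \{0,-1\}$, and prepending the appropriate digit $1$ or $2$ inverts this. Thus $\T_n$ is in natural bijection with $\mathcal{P}_0 \sqcup \mathcal{P}_{-1}$, the disjoint union of unconstrained lattice paths of length $n-1$ ending at heights $0$ and $-1$.

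Next I would biject $\mathcal{P}_0 \sqcup \mathcal{P}_{-1}$ with the set of Motzkin prefixes of length $n-1$ via the reflection principle. Writing $p(m,h)$ for the number of Motzkin prefixes of length $m$ ending at height $h$ and $P(m,k)$ for the unconstrained paths ending at $k$, reflection gives $p(m,h) = P(m,h) - P(m,-h-2)$; summing over $h\ge 0$ and telescoping yields $\sum_{h\ge 0} p(m,h) = P(m,0) + P(m,1)$, which by the up/down symmetry $P(m,1)=P(m,-1)$ equals $|\mathcal{P}_0| + |\mathcal{P}_{-1}|$. An explicit bijection is realized by iterated reflection: a Motzkin prefix of even final height $2k$ corresponds, by $k$ successive reflections across the lines $y=-1,-3,\ldots$ combined with $U\leftrightarrow D$ swaps, to a path in $\mathcal{P}_0$ of minimum depth $-k$; a prefix of odd final height $2k+1$ corresponds to a path in $\mathcal{P}_{-1}$ by the analogous construction (after an initial $U\leftrightarrow D$ swap to bring the endpoint from $2k+1$ down to $-1$).

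The main obstacle will be pinning down the iterated reflection carefully, specifying at each step which portion of the path gets reflected (typically, the portion after the first touch of a newly reached low level) and checking that the procedure and its inverse terminate uniquely. A safer, less constructive alternative is to show that $|\T_n|$ satisfies the same recurrence $a_{n+1}=3a_n-M_n$ already established in the paper, with matching initial value $|\T_1|=1$; this reduces, after decomposing $\T_{n+1}$ by its last digit, to the single-step reflection identity $P(n-1,0)-P(n-1,2)=M_n$, and gives the equality by induction.
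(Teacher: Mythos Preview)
Your approach is correct and genuinely different from the paper's. The paper constructs an explicit map $\psi:\L_{2n-1}\to\T_n$ by first applying $\varphi_\ell$ to get a Motzkin prefix and then handling cases according to the final height $m$ of that prefix: for $m\in\{0,1\}$ it appends a single digit, while for $m\ge 2$ it factors the prefix as $\PM_0\U\PM_1\U^{k_1}\cdots\PM_i$ and interleaves the words $\tau(\PM_j)$ with the letters of a fixed ``extreme'' word $\psi(D_m)\in\T_{m+1}$. This yields a concrete combinatorial map, but the verification that it is a bijection is left largely to the reader and the construction is rather ad hoc.

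Your route is cleaner on the enumerative side: stripping the leading digit identifies $\T_n$ with unconstrained three-step paths of length $n-1$ ending at height $0$ or $-1$, and the telescoping reflection identity $\sum_{h\ge 0}p(m,h)=P(m,0)+P(m,1)$ then matches this with Motzkin prefixes. That argument is complete and requires nothing beyond the standard reflection bijection. Your fallback via the recurrence $a_{n+1}=3a_n-M_n$ is also sound: from $|\T_n|=P(n-1,0)+P(n-1,1)$ and the one-step expansion of $P(n,\cdot)$ one reduces exactly to $P(n-1,0)-P(n-1,2)=M_n$, which is the reflection count of Motzkin paths with $n-1$ steps. What the paper's approach buys is an explicit word in $\T_n$ attached to each symmetric diagram; what yours buys is a short, conceptual proof that avoids the case analysis and the somewhat delicate factorization of prefixes. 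The iterated-reflection bijection you sketch for the explicit version is standard but, as you note, would need the usual care in specifying the reflected segments; the telescoping count already suffices for the theorem as stated.
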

\begin{proof}
Let $D\in \L_{2n-1}$. We start by considering some special cases.

If $\varphi_\ell(D)$ is a Motzkin path of length $n-1$, then we define
\[  \psi(D) = \tau(\varphi_\ell(D))+\mathtt{1}, \]
and if $\varphi_\ell(D)$ is a path that ends at the point $(n-1,1)$, we let
\[  \psi(D) = \tau(\varphi_\ell(D))+\mathtt{0}. \]
Here ``+ {\tt a}'' means to append {\tt a} at the end of the word $\tau(\varphi_\ell(D))$. Note that $\psi(D)\in \T_n$.

If $n>2$ and $D\in\L_{2n-1}$ consists of $n-1$ nested arcs and a single isolated node in the middle, then $\varphi_\ell(D)$ is the straight path from $(0,0)$ to $(n-1,n-1)$. In this case, we let
\begin{equation}\label{eqn:extreme}
  \psi(D) = 
  \begin{cases}
    {\tt 20}^{k}{\tt 2}^{k-1} &\text{if } n=2k, \\
    {\tt 10}^{k}{\tt 2}^{k} &\text{if } n=2k+1,
  \end{cases}
\end{equation}
where $a^k$ denotes the word made out of $k$ consecutive copies of $a$. By definition, $\psi(D)$ belongs to $\T_n$.
For example, 
\[ \drawmatching{0.3}{9}{1/9,2/8,3/7,4/6} \quad \longleftrightarrow\quad \text{{\tt 10022}}\in\T_5.  \]

If $D\in\L_{2n-1}$ is an arc diagram such that the path $\varphi_\ell(D)$ ends at the point $(n-1,m)$ with $1<m<n-1$, then $\varphi_\ell(D)$ admits a unique factorization of the form
\[ \varphi_\ell(D) = \PM_0 \U \PM_1 \U^{k_1} \PM_2 \U^{k_2} \cdots \PM_{i-1} \U^{k_i} \PM_{i}, \]
where $\U^{j}$ denotes an ascent of length $j$, $k_1+\dots+k_i=m-1$, and each $\PM_j$ is a Motzkin path (possibly the empty path $\varepsilon$). Note that $D_m = \varphi_\ell^{-1}(\U^m)$ is the diagram in $\L_{2m+1}$ consisting of $m$ nested arcs and a single isolated node in the middle. Therefore, $\psi(D_m)$ is of the form \eqref{eqn:extreme} with $n$ replaced by $m+1$. Factor $\psi(D_m)$ as
\[ \psi(D_m) =  \gamma_0 \gamma_{k_1} \gamma_{k_2} \cdots \gamma_{k_i},  \]
where $\mathop{length}( \gamma_{k_j})=k_j$ for $j=2,\dots,i$, and $\gamma_0=\begin{cases} \mathtt{2} &\text{for $m$ odd}\\ \mathtt{10} &\text{for $m$ even}\end{cases}$.

\smallskip
Finally, with $\tau(\varepsilon)$ defined as the empty word, we let
\begin{equation*}
  \psi(D) = \tau(\PM_0) \gamma_0 \tau(\PM_1) \gamma_{k_1} \cdots \tau(\PM_{i-1}) \gamma_{k_i} \tau(\PM_{i}).
\end{equation*}
Once again, our construction gives $\psi(D)\in\T_n$.
\end{proof}

An example that illustrates our map $\psi$ is given in Figure~\ref{fig:mapPhi}.

\begin{figure}[ht!]
\begin{tikzpicture}[scale=0.75]
\node at (0,2.8) {\drawmatching{0.45}{19}{1/4,4/16,6/14,7/9,11/13,16/19}};
\draw[->] (0,1.7) -- (0,1.2);
\node at (0,0) {\drawpath{0.45}{9}{3}{0,1,1,0,1,1,2,3,2,2}};
\draw[->] (0,-1.2) -- (0,-1.7);
\node at (0,-2.1) {{\tt 2101012201}.};
\end{tikzpicture}
\caption{Example of map from $\L_{2n-1}$ to $\T_n$.}
\label{fig:mapPhi}
\end{figure}
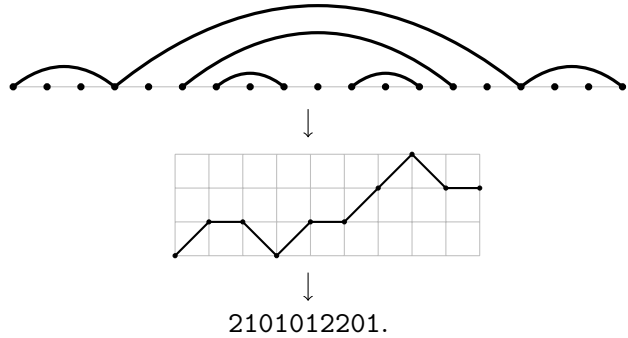

The elements of $\T_3$ corresponding to the arc diagrams in Figure~\ref{fig:case5} (top to bottom) are
\[ \mathtt{111},\quad \mathtt{210},\quad \mathtt{120},\quad \mathtt{102},\quad \mathtt{201}. \]

\section{Symmetric arc diagrams of Bell type}
\label{sec:Bell}

While the sets of diagrams considered in Section~\ref{sec:Motzkin} are of interest from a combinatorial point of view, they are not good enough to model RNA secondary structures with pseudoknots and base triples like the one in Figure~\ref{fig:baseTripleEx}. In this section, we will study the set $\B_n$ of arc diagrams with $n$ nodes that have the following properties:
\begin{itemize}
\item[$\circ$] there are no arcs connecting two adjacent nodes,
\item[$\circ$] arcs may cross,
\item[$\circ$] nonnesting arcs may intersect at a node.
\end{itemize}
In other words, the elements of $\B_n$ correspond to partitions of $[n]$ into blocks with no consecutive integers. Thus $|\B_n|$ is the Bell number $B_{n-1}$, cf.\ \cite[A000110]{Sloane}.

For example, for $n=5$ there are 15 such diagrams (see Figure~\ref{fig:bell5}).

\begin{figure}[ht]
\def\ff{0.55}
\drawmatching{\ff}{5}{} \qquad \drawmatching{\ff}{5}{2/5} \qquad \drawmatching{\ff}{5}{1/4,2/5}\\[5pt]
\drawmatching{\ff}{5}{1/3} \qquad \drawmatching{\ff}{5}{3/5} \qquad \drawmatching{\ff}{5}{1/4,3/5}\\[5pt]
\drawmatching{\ff}{5}{1/4} \qquad \drawmatching{\ff}{5}{1/5,2/4} \qquad \drawmatching{\ff}{5}{2/4,3/5}\\[5pt]
\drawmatching{\ff}{5}{1/5} \qquad \drawmatching{\ff}{5}{1/3,2/4} \qquad \drawmatching{\ff}{5}{1/3,3/5}\\[5pt]
\drawmatching{\ff}{5}{2/4} \qquad \drawmatching{\ff}{5}{1/3,2/5} \qquad \drawmatching{\ff}{5}{1/3,3/5,2/4}
\caption{Elements of $\B_5$.}
\label{fig:bell5}
\end{figure}

Each element of $\B_n$ may be decomposed into blocks that we call {\em arc-connected} components. A single node is arc-connected, a single arc is arc-connected, and every subdiagram consisting of consecutive arcs joined by nodes is arc-connected. For example, the diagram

\begin{center}
\tikz[scale=0.7]{%
  \node at (5,0) {\drawmatching{0.8}{8}{1/3,3/6,6/8,4/7}};
  \foreach \i in {1,...,8}{
	\node[below=8pt] at (1.15*\i,0) {\scriptsize \i};
  }
}
\end{center}
has 4 arc-connected components that correspond to the blocks of the partition $1368\,|\,2\,|\,47\,|\,5$.

Let $\A_n\subset \B_n$ be the subset of symmetric elements, and let $A(n,k)$ be the number of elements in $\A_n$ having $k+1$ arc-connected components. Clearly, $A(1,0)=1$, $A(n,0)=0$ for $n>1$, and $A(n,n-1)=1$. Moreover, for $n>1$ it can be easily checked that
\[ A(n,1) =1 \;\text{ and }\; A(n,n-2) = \lfloor\tfrac{n-1}{2}\rfloor. \]
In particular, $A(2,1)=1$, $A(3,1)=A(3,2)=1$, and $A(4,1)=A(4,2)=A(4,3)=1$.
\begin{theorem}
For $n\ge 5$ and $2\le k\le n-3$, we have
\[ A(n,k) = kA(n-2,k) + A(n-2,k-1) + A(n-2,k-2). \]
\end{theorem}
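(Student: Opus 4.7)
The plan is to prove the recurrence bijectively by analyzing how the extreme nodes $1$ and $n$ sit inside the block structure of a symmetric diagram $D\in\A_n$ with $k+1$ components. Let $B$ be the block of $D$ containing node $1$; by symmetry, the mirror block $B^c=\{n+1-j:j\in B\}$ contains node $n$. I would split $D$ into four disjoint cases according to this block: (I) $B=\{1\}$; (II) $B=\{1,n\}$; (III) $|B|\ge 2$ with $B\ne B^c$; (IV) $B=B^c$ with $|B|\ge 3$. In each case, form $D'\in\A_{n-2}$ by erasing nodes $1$ and $n$ and relabeling $2,\dots,n-1$ as $1,\dots,n-2$. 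Cases (I) and (II) are immediate: in (I) the two singleton blocks $\{1\}$ and $\{n\}$ disappear, giving $D'$ with $k-1$ components and contributing $A(n-2,k-2)$; in (II) the block $\{1,n\}$ disappears, giving $D'$ with $k$ components and contributing $A(n-2,k-1)$.

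Cases (III) and (IV) together should account for the $k\,A(n-2,k)$ term. Since every block survives the erasure, $D'$ still has $k+1$ components and is counted by $A(n-2,k)$. The heart of the proof is counting the preimages of a fixed $D'$ under this reduction. I would partition the blocks of $D'$ into $s$ self-mirror blocks and $p$ mirror pairs, so that $s+2p=k+1$. To reconstruct $D$, choose a block $C$ of $D'$: if $C$ lies in a mirror pair, attach node $1$ to $C$ and node $n$ to $C^c$ (Case (III), with $2p$ options, since the two elements of each pair give two distinct symmetric diagrams); if $C$ is self-mirror, attach both $1$ and $n$ to $C$ (Case (IV), with $s$ options). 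This gives $s+2p=k+1$ candidates, but the no-adjacent-integers condition will exclude exactly one, producing $k$ valid preimages.

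The hard part will be verifying this one-option exclusion. Attaching node $1$ to $C$ places, in old labels, node $1$ together with old node $2$ (which is new node $1$ of $D'$) into the same block; avoiding the forbidden adjacency $\{1,2\}$ in $D$ forces $C\ne C_1^{(D')}$, where $C_1^{(D')}$ is the block of $D'$ containing its first node. Under the symmetry of $D'$ the corresponding constraint at node $n$ collapses to the same condition $C\ne C_1^{(D')}$. A brief case split on whether $C_1^{(D')}$ is itself self-mirror or lies in a mirror pair confirms that the forbidden block is always among our $s+2p$ options and removes exactly one choice, leaving $k$. Adding the contributions from the four cases then yields the claimed recurrence $A(n,k)=k\,A(n-2,k)+A(n-2,k-1)+A(n-2,k-2)$.
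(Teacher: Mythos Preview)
Your proof is correct and follows essentially the same decomposition as the paper's: cases (I) and (II) are exactly the paper's cases (i) and (ii), and your cases (III) and (IV) together coincide with the paper's case (iii) (``first node connected to a node other than the last node''). Your treatment of the $k\,A(n-2,k)$ term is more explicit than the paper's---you carefully verify via the self-mirror/mirror-pair decomposition that each $D'\in\A_{n-2}$ with $k+1$ components has exactly $k$ preimages, whereas the paper simply asserts that node $1$ must join ``one of the $k$ arc-connected components that do not contain the second node''---but the underlying idea is identical.
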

\begin{proof}
For $n\ge 5$, the elements of $\A_{n}$ with $k+1$ arc-connected components can be split into three disjoint sets of diagrams depending on the nature of the first and last nodes:
\begin{enumerate}[\;$(i)$]
\item The first and last nodes are isolated.
\item The first and last nodes are connected by a single arc.
\item The first node is connected to a node other than the last node.
\end{enumerate}
In the first case, the other $n-2$ nodes must be part of a symmetric arc diagram with $k-1$ arc-connected components. There are $A(n-2,k-2)$ such diagrams. In the second case, the first and last nodes form an arc-connected component, so the other $n-2$ nodes must be part of a symmetric diagram with $k$ components, and there are $A(n-2,k-1)$ of those.

Finally, if the first node is connected to a node other than the last node, it must be part of one of the $k$ arc-connected components that do not contain the second node. There are $kA(n-2,k)$ such arc diagrams and the claimed relation follows by combining all 3 cases.
\end{proof}

Table~\ref{tab:symStirling} illustrates the triangle generated by $A(n,k)$. These numbers are related to the enumeration of achiral color patterns, see \cite[A304972]{Sloane}.
\begin{table}[ht]
\small
\def\R{\rule[-1ex]{0ex}{3.5ex}}
\begin{tabular}{c|rrrrrrrrrr|r}
  $n\backslash k$ & 0 & 1 & 2 & 3 & 4 & 5 & 6 & 7 & 8 & 9 & $\Sigma$ \\[1pt] \hline
  \R 1&1&&&&&&&&&&1 \\
  \R 2&0&1&&&&&&&&&1 \\
  \R 3&0&1&1&&&&&&&&2 \\
  \R 4&0&1&1&1&&&&&&&3 \\
  \R 5&0&1&3&2&1&&&&&&7 \\
  \R 6&0&1&3&5&2&1&&&&&12 \\
  \R 7&0&1&7&10&9&3&1&&&&31 \\
  \R 8&0&1&7&19&16&12&3&1&&&59 \\
  \R 9&0&1&15&38&53&34&18&4&1&&164 \\
  \R 10&0&1&15&65&90&95&46&22&4&1&339
\end{tabular}
\vspace{2ex}
\caption{Triangle generated by $A(n,k)$.}
\label{tab:symStirling}
\end{table}

The total number of elements in $\A_n$ is then given by
\[ A_n = \sum_{k=0}^{n-1} A(n,k). \]
The sequence $(A_n)_{n\in\mathbb{N}}$ starts with $1, 1, 2, 3, 7, 12, 31, 59, 164, 339,\dots$, see \cite[A080107]{Sloane}.

\section{Final remarks}
\label{sec:summary}

The primary goal of this paper was to enumerate the symmetric elements in each of the four families of arc diagrams described in the introduction. For the sequences $(R_n)_{n\in\mathbb{N}}$, $(Q_n)_{n\in\mathbb{N}}$, $(L_n)_{n\in\mathbb{N}}$, and $(A_n)_{n\in\mathbb{N}}$, we found recurrence relations and connections to other combinatorial objects. The beginning terms of these sequences, next to the corresponding sequence that counts all elements of the same type, are listed in Table~\ref{tab:allSequences}. As mentioned in Section~\ref{sec:Motzkin}, the sequence $(L_{2n-1})_{n\in\mathbb{N}}$ is listed as A005773 in \cite{Sloane}.

\begin{table}[ht!]
\small
\def\R{\rule[-1ex]{0ex}{3.7ex}}
\begin{tabular}{c|l|c}
\R  & Sequence & OEIS \cite{Sloane} \\\hline\hline
\R $R_n$ & $1, 1, 2, 2, 4, 5, 9, 12, 21, 29, 50, 71,\dots$ & A088518\\
 $S_n$ & $1, 1, 2, 4, 8, 17, 37, 82, 185, 423, 978, 2283, \dots$ & A004148\\[2pt] \hline\hline
\R $Q_n$ & $1, 1, 2, 3, 5, 11, 16, 43, 59, 179, 238, 801,\dots$ & N/A \\
 $P_n$ & $1, 1, 2, 5, 13, 37, 112, 363, 1235, 4427, 16526, 64351,\dots$ & A170941\\[2pt] \hline\hline
\R $L_n$ & $1, 1, 2, 2, 5, 5, 13, 13, 35, 35, 96, 96,\dots$ & N/A \\
 $M_n$ & $1, 1, 2, 4, 9, 21, 51, 127, 323, 835, 2188, 5798,\dots$ & A001006\\[2pt] \hline\hline
\R $A_n$ & $1, 1, 2, 3, 7, 12, 31, 59, 164, 339, 999, 2210,\dots$ & A080107\\
 $B_n$ & $1, 1, 2, 5, 15, 52, 203, 877, 4140, 21147, 115975, 678570,\dots$ & A000110
\end{tabular}
\vspace{2ex}
\caption{Sequences discussed in sections \ref{sec:noncrossing} through \ref{sec:Bell}.}
\label{tab:allSequences}
\end{table}

For simple RNA secondary structures, corresponding to involutions with no transpositions of the form $(i,i+1)$, there are known\footnote{See \cite[Eq.~(26)]{StWat79} and \cite[A088518]{Sloane}.} asymptotic estimates:
\[ S_n \sim c_1\cdot \frac{(1+\varphi)^n}{n^{3/2}}  \;\text{ and }\; R_n \sim c_2\cdot\frac{\varphi^n}{n^{1/2}} \;\text{ as } n\to\infty, \]
where $c_1\approx 1.104$, $c_2\approx 0.863$, and $\varphi=\frac{1+\sqrt{5}}{2}$. Thus,
\[ \frac{R_n}{S_n} \sim \frac{c_2}{c_1}\cdot \frac{n}{\varphi^n}\; \text{ as } n\to\infty. \]

Moreover, for the arc diagrams of Motzkin type, we have\footnote{See \cite[A001006]{Sloane} and \cite[A005773]{Sloane}.}
\[ M_n \sim \sqrt{\frac{3}{4\pi}}\cdot \frac{3^{n}}{n^{3/2}} \;\text{ and }\; L_n < \sqrt{\frac{2}{\pi}}\cdot\frac{3^{n/2}}{n^{1/2}} \;\text{ as } n\to\infty, \]
which implies
\[ \frac{L_n}{M_n} < \sqrt{\frac{8}{3}} \cdot \frac{n}{3^{n/2}} \;\text{ as } n\to\infty. \] 

For the other two families, numerical data seems to indicate that the ratio of the number of symmetric elements to the total number of elements of the same type, converges even faster to 0, see Table~\ref{tab:numData}. 

\begin{table}[ht]
\def\R{\rule[-1ex]{0ex}{3.5ex}}
\small
\begin{tabular}{c|cccc}
$n$ & $R_n/S_n$ & $L_n/M_n$ & $Q_n/P_n$ & $A_n/B_n$ \\[2pt] \hline\hline
\R 5 & 0.2941176471 & 0.2380952381 & 0.2972972973 & 0.2307692308 \\
6   & 0.2432432432 & 0.2549019608 & 0.1428571429 & 0.1527093596 \\
7   & 0.1463414634 & 0.1023622047 & 0.1184573003 & 0.0672748005 \\
8   & 0.1135135135 & 0.1083591331 & 0.0477732794 & 0.0396135266 \\
9   & 0.0685579196 & 0.0419161677 & 0.0404337023 & 0.0160306426 \\
10 & 0.0511247444 & 0.0438756856 & 0.0144015491 & 0.0086139254 \\
11 & 0.0310994306 & 0.0165574336 & 0.0124473590 & 0.0032568490 \\
12 & 0.0225200074 & 0.0172135904 & 0.0040043011 & 0.0016235535 \\
13 & 0.0137416569 & 0.0063822158 & 0.0034992873 & 0.0005799720 \\
14 & 0.0097458185 & 0.0066001373 & 0.0010349767 & 0.0002712948 \\
15 & 0.0059589192 & 0.0024148990 & 0.0009145496 & 0.0000922971 \\
16 & 0.0041594968 & 0.0024875010 & 0.0002514919 & 0.0000408516 \\
17 & 0.0025473928 & 0.0009008057 & 0.0002238335 & 0.0000133153 \\
18 & 0.0017558071 & 0.0009249765 & 0.0000577643 & 0.0000056122 \\
19 & 0.0010765606 & 0.0003322109 & 0.0000517725 & 0.0000017607 \\
20 & 0.0007344722 & 0.0003402618 & 0.0000126234 & 0.0000007103 \\
\end{tabular}
\vspace{2ex}
\caption{}
\label{tab:numData}
\end{table}


\end{document}